\documentclass[12pt,a4paper]{article}
\usepackage[utf8]{inputenc}
\usepackage{amsmath}
\usepackage{amsfonts}
\usepackage{amssymb}
\usepackage{graphicx}
\usepackage{amsthm}
\usepackage[left=2cm,right=3cm,top=4cm,bottom=3cm]{geometry}

\usepackage{cite}

\newtheorem{theorem}{Theorem}
\newtheorem{lemma}{Lemma}
\newtheorem{proposition}{Proposition}

\newtheorem{definition}{Definition}
\newtheorem{example}{Example}

\newtheorem{remark}{Remark}

\title{On Subspace Convex-Cyclic Operators}
\author{Dilan Ahmed$^{2}$ \and Mudhafar Hama$^{3}$ \and Jaros{\l}aw Wo\'{z}niak$^1$ \and Karwan Jwamer$^3$}
\date{%  
\small{${^1}$ Institute of Mathematics, Department of Mathematics and Physics, University of Szczecin, ul. Wielkopolska 15, 70-451 Szczecin, Poland; \\ \texttt{wozniak@univ.szczecin.pl} \\
$^{2}$ University of Sulaimani, College of Education, Department of Mathematics, Kurdistan Region, Sulaimani, Iraq; \\\texttt{ dilan.ahmed@univsul.edu.iq}\\
$^{3}$ University of Sulaimani, College of Science, Department of Mathematics, Kurdistan Region, Sulaimani, Iraq;\\ \texttt{mudhafar.hama@univsul.edu.iq} \quad and  \quad \texttt{karwan.jwamer@univsul.edu.iq} \\
}}

\begin{document}
\maketitle
%\textbf{keywords:}\, {convex-cyclic operators; Kitai criterion; convex-cyclic transitive operators}
\abstract{Let $\mathcal{H}$ be an infinite dimensional real or complex separable Hilbert space. We introduce a special type of a bounded linear operator $T$ and its important relation with invariant subspace problem on $\mathcal{H}$: operator $T$ is said to be is subspace convex-cyclic for a subspace $\mathcal{M}$, if there exists a vector whose orbit under $T$ intersects the subspace $\mathcal{M}$ in a relatively dense set. We give the sufficient condition for a subspace convex-cyclic transitive operator $T$ to be subspace convex-cyclic. We also give a special type of Kitai criterion related to invariant subspaces which implies subspace convex-cyclicity. We conclude showing a counterexample of a subspace convex-cyclic operator which is not subspace convex-cyclic transitive.}

% Keywords
\textbf{keywords:} {ergodic dynamical systems, convex-cyclic operators; Kitai criterion; convex-cyclic transitive operators}

\textbf{MSC 2010:} 47A16, %47A15, 
37A25.

\section{Introduction}
Ergodic dynamical systems seem to be of interest for a few decades, with an increasing number of papers appearing lately (see, for example, \cite{ergo1,ergo2,ergo3,ergo4,ergo5}), a large number of them concerning convex-cyclic operators.

A bounded linear operator $T$ on an infinite  dimensional separable Hilbert space is convex-cyclic(see \cite{A}) if there exists a vector $x$ in $\mathcal{H}$ such that $\widehat{Orb(T,x)} = \{ P(T)x : P  \, \text{is a} \text{ convex polynomial} \} $  is dense in $\mathcal{H}$ and the vector $x$ is said to be convex-cyclic vector for $T$. A bounded linear operator $T$ is said to be cyclic if there exists a vector $x$ in $\mathcal{H}$ such that the linear span of the orbit $[T,x]=span \{T^nx: n \in \mathbb{N} \} $ is dense in $\mathcal{H}$ and $x$ is called cyclic vector. If the orbit $Orb(T,x)= \{T^nx: n \in \mathbb{N} \}$ itself is dense  in $\mathcal{H}$ without of linear span, then $T$ is called hypercyclic and $x$ is called hypercyclic vector. The operator $T$ is said to be supercyclic if the cone generated by     $Orb(T,x)$ i.e.$\mathbb{C}Orb(T,x)= \{\lambda T^nx: \lambda \in \mathbb{C} \, \text{and} \, n \in \mathbb{N} \}$ is dense in $\mathcal{H}$ and $x$ is called supercyclic vector\cite{4}, \cite{3}. 
In \cite{C} it is mentioned that between a set and its linear span there is a convex hull, from this we get that every hypercyclic operator is convex-cyclic and every convex-cyclic operator is cyclic.

In this work we want to modify the notions given above and introduce the new concept of subspace convex-cyclic operator. In our case the orbit of subspace hypercyclic, subspace supercyclic and subspace convex-cyclic under hypercyclic, supercyclic and convex-cyclic operator respectively, intersected with a given selected subspace is dense in that subspace. 

The paper  is organized as follows. First we define the concept of subspace convex-cyclic operators and construct an example of subspace convex-cyclic operator which does not need to be convex-cyclic operator(Example \ref{example: 1st}). Next we prove that being subspace convex-cyclic transitive implies being subspace convex-cyclic operator. Then we show that a ``subspace convex-cyclic criterion" holds. To this end we find an interesting relation between our new operator and invariant subspaces. We also show, by giving a proper example, that for being subspace convex-cyclic being transitive and fulfilling the criterion are not necessary conditions. In the end we present some open questions concerning subspace cyclic operators.

\section{Definition and Examples}
Let $\mathcal{H}$ be an infinite dimensional real or complex separable Hilbert space. Whenever we talk about a subspace $\mathcal{M}$ of $\mathcal{H}$ we will assume that $\mathcal{M}$ is closed topologically. And let $ \textbf{B}(\mathcal{H})$ be the algebra of all linear bounded operators on $\mathcal{H}$. 
We start with our main definition.

\begin{definition}
Let $ T \in \textbf{B}(\mathcal{H})$ and let $\mathcal{M}$ be a non-zero subspace of $\mathcal{H}.$ We say that $T$ is \textbf{subspace convex-cyclic} operator, if there exist $ x \in \mathcal{H}$ such that $ \widehat{Orb(T,x)} \cap \mathcal{M} $ is dense in $\mathcal{M}$, where 
\begin{align*}
\widehat{Orb(T,x)} & = \left\lbrace P(T) x : P\, \text{is convex polynomial} \, \right\rbrace, \quad i.e.  \\
& = \left\lbrace P(T)x: P(T):= a_0 + a_1 T + a_2 T^2 + \cdots + a_n T^n, n \in \mathbb{N}, \sum \limits_{i=0}^{n} a_i = 1  \right\rbrace.
\end{align*}
Such a vector $x$ is said to be a subspace convex-cyclic vector. 
\end{definition}

We will write $ \mathcal{M} $ convex-cyclic  instead of subspace convex-cyclic. Moreover, let us define $CoC(T,\mathcal{M}) := \{x\in \mathcal{H} : \widehat{Orb(T,x)} \cap \mathcal{M}  \text{\, is dense in } \mathcal{M}  \}$ \, as the set of all subspace convex-cyclic vectors for $\mathcal{M}$.

\begin{remark}
Note that $\mathcal{M}$ can be any non-empty subset,  convex or not.
\end{remark}

\begin{example}
\label{example: 1st}
Let $T$ be a convex-cyclic operator on $\mathcal{H}$ and $I$ be the identity operator on $\mathcal{H}$. Then $T \oplus I : \mathcal{H} \oplus \mathcal{H}  \to  \mathcal{H} \oplus \mathcal{H}  $ is subspace convex-cyclic operator for subspace $\mathcal{M}= \mathcal{H} \oplus \{0\}$ with subspace convex-cyclic vector $x \oplus 0.$ \\
In fact, since $T$ is convex-cyclic operator on $\mathcal{H}$, so there exist $x \in \mathcal{H}$, such that $ \widehat{Orb(T,x)}$ is dense in $\mathcal{H}$.
Now we can consider the $T \oplus I : \mathcal{H} \oplus \mathcal{H}  \to  \mathcal{H} \oplus \mathcal{H}.$ \\
Let $\mathcal{M}:=\mathcal{H} \oplus \{ 0 \} \subseteq \mathcal{H} $ and there exist $ m := x \oplus  0 $ such that
\begin{align*}
\widehat{Orb}(T \oplus I,m ) & = \left\lbrace P(T \oplus I)m : P\; \text{is convex polynomial} \right\rbrace \\
& = \left\lbrace P(T)x \oplus 0 : P\; \text{is  convex polynomial} \right\rbrace \\
& \subseteq \mathcal{H} \oplus \{0\} = \mathcal{M}.   
\end{align*}
And since $\widehat{Orb(T,x)}$ is dense in $\mathcal{H}$, then $\widehat{Orb}(T \oplus I,(x \oplus  0) ) \cap \mathcal{H} \oplus \{0\}  $ is dense in $\mathcal{H} \oplus \{0\} =\mathcal{M}$
so we get that $ T \oplus I$ is a subspace convex-cyclic operator.
\end{example}

\begin{remark}
\label{Rem:JD}
The above example shows that if the operator $T$ is subspace convex-cyclic then $T$ does not need to be convex-cyclic. For clarifing that  let us recall the following Propositions \ref{Pro: 1} and \ref{Pro: 2.25Book2}, from \cite{A} and \cite{B2} respectively.

\end{remark}

\begin{proposition}[see \cite{A}]
\label{Pro: 1}
Let $T:X \to X$ be an operator. If $T$ is convex-cyclic, then
\begin{enumerate}
\item
$||T|| > 1$,
\item
$ \sup \{ ||T^n||: n \geq 1 \}= + \infty$,
\item
$ \sup \{ ||T^{*n} \Lambda ||: n \in \mathbb{N} \}= + \infty, \:$ for every $\Lambda \neq 0 \: in \: X^*$.
\end{enumerate}
\end{proposition}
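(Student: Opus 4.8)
The three assertions are classical facts about convex-cyclic operators; I would prove them directly from the definition, working with a fixed convex-cyclic vector $x$ and the dense set $\widehat{Orb(T,x)}=\{P(T)x : P \text{ convex polynomial}\}$. The plan is to exploit that convex polynomials have coefficients summing to $1$, so that applying $T$ to the orbit cannot "shrink" things too much, and then to combine density with a hypothetical norm bound to derive a contradiction.

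For part (1), I would argue by contradiction: suppose $\|T\|\le 1$. Then for any convex polynomial $P(z)=\sum_{i=0}^n a_i z^i$ with $a_i\ge 0$ and $\sum a_i=1$, one has $\|P(T)\|\le \sum_{i=0}^n a_i\|T\|^i \le \sum_{i=0}^n a_i = 1$ (assuming the coefficients are nonnegative; in the real/complex convex-polynomial setting one works with the triangle inequality to get $\|P(T)\| \le \sum |a_i|$, and for the genuinely convex case $\sum a_i = 1$ with $a_i\ge 0$). Hence $\widehat{Orb(T,x)}\subseteq \{y : \|y\|\le \|x\|\}$, a bounded set, which cannot be dense in the infinite-dimensional space $X$. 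This contradiction gives $\|T\|>1$. The mild subtlety here is pinning down exactly which class of "convex polynomials" is meant (nonnegative coefficients summing to one), but once that is fixed the estimate is immediate.

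For part (2), again suppose $M:=\sup_{n\ge 1}\|T^n\|<\infty$; set $M_0=\max(1,M)$. Then for a convex polynomial $P=\sum_{i=0}^n a_i z^i$ we get $\|P(T)x\|\le \sum_{i=0}^n a_i \|T^i\| \|x\| \le M_0 \|x\|$, so once more $\widehat{Orb(T,x)}$ is norm-bounded and cannot be dense. For part (3), I would fix $\Lambda\neq 0$ in $X^*$ and suppose $C:=\sup_{n\in\mathbb{N}}\|T^{*n}\Lambda\|<\infty$. For any convex polynomial $P$, $\langle P(T)x,\Lambda\rangle = \langle x, \overline{P}(T^*)\Lambda\rangle$ where $\overline{P}$ is the polynomial with conjugated coefficients, and $\|\overline{P}(T^*)\Lambda\| \le \sum_i |a_i|\,\|T^{*i}\Lambda\| \le C$ (using $\sum|a_i|\ge\sum a_i=1$ needs care — in fact for nonnegative coefficients $\sum a_i = 1$ directly). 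Hence $|\langle P(T)x,\Lambda\rangle|\le C\|x\|$ for all convex $P$, so the image of $\widehat{Orb(T,x)}$ under the functional $\Lambda$ is bounded in the scalar field; but density of $\widehat{Orb(T,x)}$ forces $\Lambda(\widehat{Orb(T,x)})$ to be dense in $\mathbb{K}$, a contradiction.

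The main obstacle, such as it is, is purely bookkeeping: keeping straight whether "convex polynomial" forces nonnegative coefficients (so $\sum|a_i|=\sum a_i=1$) or merely $\sum a_i = 1$, since in the latter case $\sum|a_i|$ need not be bounded and the above estimates fail. Under the standard convention (nonnegative coefficients summing to $1$) all three estimates are one-line applications of the triangle inequality, and each assertion reduces to the single principle: a dense subset of an infinite-dimensional normed space cannot be norm-bounded, nor can its image under a nonzero functional be bounded. Since this is cited verbatim from \cite{A}, I would in the write-up simply refer to that source and include the short contradiction arguments above for completeness.
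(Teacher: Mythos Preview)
The paper does not supply its own proof of this proposition: it is quoted verbatim from \cite{A} and used only as a black box (to rule out $T\oplus I$ being convex-cyclic). Your contradiction arguments are correct under the standard convention that a convex polynomial has nonnegative coefficients summing to~$1$, and they are exactly the classical ones; there is nothing further to compare against here.
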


\begin{proposition}[see \cite{B2}]
\label{Pro: 2.25Book2}
Let $ S:X \to X$ and $ T:Y \to Y$ be operators. If $ S \oplus T $ is hypercyclic then so are $ S $ and $ T $.
\end{proposition}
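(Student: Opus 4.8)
The plan is to push a hypercyclic vector for $S\oplus T$ down to each factor through the coordinate projections. Write $\pi_X\colon X\oplus Y\to X$ and $\pi_Y\colon X\oplus Y\to Y$ for the canonical projections; these are bounded, linear, and \emph{surjective}, and they satisfy the intertwining identities $\pi_X\circ(S\oplus T)=S\circ\pi_X$ and $\pi_Y\circ(S\oplus T)=T\circ\pi_Y$. An immediate induction on $n$ then gives $\pi_X\circ(S\oplus T)^n=S^n\circ\pi_X$ and $\pi_Y\circ(S\oplus T)^n=T^n\circ\pi_Y$ for every $n\in\mathbb{N}$, which is just the concrete statement $(S\oplus T)^n=S^n\oplus T^n$.

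First I would fix a hypercyclic vector $(x_0,y_0)\in X\oplus Y$ for $S\oplus T$, so that $Orb(S\oplus T,(x_0,y_0))=\{(S^nx_0,T^ny_0):n\in\mathbb{N}\}$ is dense in $X\oplus Y$. Applying $\pi_X$ and using the identity above yields $\pi_X\bigl(Orb(S\oplus T,(x_0,y_0))\bigr)=\{S^nx_0:n\in\mathbb{N}\}=Orb(S,x_0)$, and likewise $\pi_Y\bigl(Orb(S\oplus T,(x_0,y_0))\bigr)=Orb(T,y_0)$.

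The only point that needs a word of justification is that these projected orbits are dense. For this I would invoke the elementary fact that a continuous surjection sends dense sets to dense sets: if $f\colon E\to F$ is continuous with $f(E)=F$ and $D\subseteq E$ is dense, then $F=f(\overline D)\subseteq\overline{f(D)}$, hence $\overline{f(D)}=F$. Taking $f=\pi_X$ and $D=Orb(S\oplus T,(x_0,y_0))$ shows that $Orb(S,x_0)$ is dense in $X$, so $x_0$ is a hypercyclic vector for $S$; symmetrically $y_0$ is a hypercyclic vector for $T$. Hence both $S$ and $T$ are hypercyclic.

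There is no serious obstacle here — the argument is purely formal. The two things to be careful about are the bookkeeping that makes the orbit of $S\oplus T$ project exactly onto the orbit of each factor (handled by the intertwining identities), and the fact that density transfers along the projections precisely because they are \emph{onto} (a continuous image of a dense set need not be dense in an ambient space where the map fails to be surjective). I expect the latter to be the only step where a reader might pause.
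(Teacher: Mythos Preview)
Your argument is correct and complete. The paper itself does not supply a proof of this proposition at all --- it is simply quoted from \cite{B2} (Grosse-Erdmann and Peris, \emph{Linear Chaos}) --- so there is no ``paper's own proof'' to compare against. For what it is worth, the proof in that reference proceeds exactly as you do: the coordinate projections $\pi_X$ and $\pi_Y$ are continuous surjections intertwining $S\oplus T$ with $S$ and $T$ respectively (in the book's language, they are \emph{quasiconjugacies}), and hypercyclicity is inherited under quasiconjugacy because continuous surjections send dense sets to dense sets. Your write-up is essentially the unpacked version of that one-line argument, and the two points you flag as needing care --- the intertwining identity and the surjectivity of the projections --- are precisely the ingredients the textbook proof uses.
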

As we mentioned before, every hypercyclic operator is convex-cyclic, so  Proposition \ref{Pro: 2.25Book2} is of a great usage here.
\begin{remark}
Clearly $T \oplus I$ is not convex-cyclic operator. In fact, assume that $T \oplus I$ is convex-cyclic on $ \mathcal{H} \oplus \mathcal{H},$ then  by Proposition \ref{Pro: 2.25Book2} the identity operator must be convex-cyclic on $\{ 0 \}$,   which is impossible, because the norm of identity operator is equal to one, and by Proposition \ref{Pro: 1} we get a contradiction.
\end{remark}

\section{Subspace Convex-Cyclic Transitive Operators}
In this section we define subspace convex-cyclic transitive operators, and we will show that they will be subspace convex-cyclic operators.
First we state the classical equivalence of topological transitivity\cite{B1} and \cite{B2}, also convex-cyclic operators \cite{C}. 
 
\begin{definition}
Let $T \in \textbf{B}(\mathcal{H})$ and let $\mathcal{M}$ be a non-zero subspace of $\mathcal{H}$. We say that $T$ is $\mathcal{M}$ convex-cyclic transitive with respect to $\mathcal{M}$ if for all non-empty sets $U \subset \mathcal{M}$ and $V \subset \mathcal{M}$, both are relatively open, there exist a convex polynomial $P$ such that  $ U \cap P(T)(V) \neq \phi $ \, or \, $P(T)^{-1}(U) \cap V \neq  \phi $   contains a relatively open non-empty subset of $\mathcal{M}$.
\end{definition}

We use the ideas from \cite{B1, B2,3} changing them to work for convex polynomial spans and generalizing them, obtaining the following.

\begin{theorem}
\label{theorem:1}
Let $T \in \textbf{B}(\mathcal{H})$ and let $\mathcal{M}$ be non-zero subspace of $\mathcal{H}$. Then 
$$CoC(T,\mathcal{M}) = \bigcap\limits_{j=1}^{\infty} \: \bigcup\limits_{P \in \mathcal{P}} P(T)^{-1}(\mathcal{B}_j),$$
where $\mathcal{P}$ is the collection of all convex polynomials and $\{\mathcal{B}_j\}$ is a countable open basis for relative topology $\mathcal{M}$ as a subspace of $\mathcal{H}$.
\end{theorem}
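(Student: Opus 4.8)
The plan is to prove the set equality by double inclusion, treating it essentially as an unwinding of the definition of ``$\widehat{Orb(T,x)}\cap\mathcal M$ is dense in $\mathcal M$'' into a countable intersection/union of preimages, in the spirit of the classical characterization of the set of hypercyclic vectors as a $G_\delta$ set. First I would fix the countable open basis $\{\mathcal B_j\}_{j\ge 1}$ for the relative topology on $\mathcal M$; since $\mathcal M$ is a closed subspace of a separable Hilbert space it is itself separable metrizable, so such a basis exists. The key translation is: a vector $x$ satisfies $x\in CoC(T,\mathcal M)$ iff for every nonempty relatively open $W\subseteq\mathcal M$ there is a convex polynomial $P$ with $P(T)x\in W$, and since every such $W$ contains some basis element $\mathcal B_j$, this is equivalent to: for every $j$ there is a convex polynomial $P$ with $P(T)x\in\mathcal B_j$.

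For the inclusion $CoC(T,\mathcal M)\subseteq\bigcap_j\bigcup_{P\in\mathcal P}P(T)^{-1}(\mathcal B_j)$: if $x$ is a subspace convex-cyclic vector, then $\widehat{Orb(T,x)}\cap\mathcal M$ meets every $\mathcal B_j$, so for each $j$ there is a convex polynomial $P$ with $P(T)x\in\mathcal B_j$, i.e.\ $x\in P(T)^{-1}(\mathcal B_j)\subseteq\bigcup_{P\in\mathcal P}P(T)^{-1}(\mathcal B_j)$; since $j$ was arbitrary, $x$ lies in the intersection. Conversely, if $x\in\bigcap_j\bigcup_{P\in\mathcal P}P(T)^{-1}(\mathcal B_j)$, then for every $j$ there is a convex polynomial $P_j$ with $P_j(T)x\in\mathcal B_j$; given any nonempty relatively open $W\subseteq\mathcal M$ pick $j$ with $\mathcal B_j\subseteq W$, so $P_j(T)x\in W$, which shows $\widehat{Orb(T,x)}\cap\mathcal M$ is dense in $\mathcal M$, i.e.\ $x\in CoC(T,\mathcal M)$.

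One subtlety I would be careful about is that $P(T)x$ need not lie in $\mathcal M$ in general, so $P(T)^{-1}(\mathcal B_j)$ must be read as the full preimage in $\mathcal H$ (the set of $x\in\mathcal H$ with $P(T)x\in\mathcal B_j$), and the density statement is about the set $\widehat{Orb(T,x)}\cap\mathcal M$ inside $\mathcal M$ — these bookkeeping points make the two directions go through cleanly without any topological input beyond separability. I do not expect a genuine obstacle here; the only thing requiring a line of justification is the existence of the countable basis $\{\mathcal B_j\}$ and the standard fact that density against all nonempty relatively open sets is equivalent to density against all basis elements. The result is really a reformulation lemma that sets up the Baire-category argument used in the subsequent transitivity theorem, so I would keep the proof short and emphasize the equivalence ``convex-cyclic vector $\iff$ hits every $\mathcal B_j$.''
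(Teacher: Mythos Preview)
Your proposal is correct and follows essentially the same approach as the paper: both arguments unwind the definition of $CoC(T,\mathcal{M})$ by observing that membership in the right-hand set is equivalent to the statement ``for every $j$ there is a convex polynomial $P$ with $P(T)x\in\mathcal B_j$,'' which in turn is equivalent to density of $\widehat{Orb(T,x)}\cap\mathcal M$ in $\mathcal M$. Your write-up is more detailed (you explicitly justify the countable basis via separability and flag the reading of $P(T)^{-1}(\mathcal B_j)$ as a full preimage in $\mathcal H$), but the underlying reasoning is identical to the paper's short biconditional chain.
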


\begin{proof}
Observe that $x \in \bigcap\limits_{j=1}^{\infty} \bigcup\limits_{P \in \mathcal{P}} P(T)^{-1}(B_j)$, if and only if, for all $j \in \mathbb{N} $, there exist a convex polynomials $P$ such that $x \in P(T)^{-1}(B_j)$ which implies  $P(T)(x) \in B_j$. But since $\{ B_j \}$ is a basis for the relatively topology of $\mathcal{M}$, this occurs if and only if $\widehat{Orb(T,x)} \cap \mathcal{M}$ is dense in $\mathcal{M}$, that is $x \in CoC(T,\mathcal{M})$.
\end{proof}

\begin{lemma}
\label{lemma:1}
Let $T \in \textbf{B}(\mathcal{H})$ and let $\mathcal{M}$ be a non-zero subspace of $\mathcal{H}$. Then the following are equivalent:
\begin{enumerate}
\item
$T$ is $\mathcal{M}$ convex-cyclic transitive with respect to $\mathcal{M}$.
\item
for each relatively open subsets $U \; and \; V \;of \; \mathcal{M}$, there exist $P \in \mathcal{P}$ such that $P(T)^{-1}(U) \cap V$ is relatively open subset in $\mathcal{M}$. Where $\mathcal{P}$ is the set of all convex polynomials.
\item
for each relatively open subsets $U \; and \; V \;of \; \mathcal{M}$, there exist $P \in \mathcal{P}$ such that $P(T)^{-1}(U) \cap V \neq \phi$ and $P(T)(\mathcal{M}) \subset \mathcal{M}$.
\end{enumerate}
\end{lemma}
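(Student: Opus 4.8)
The plan is to prove the cycle of implications $(1)\Rightarrow(2)\Rightarrow(3)\Rightarrow(1)$, since $(2)\Rightarrow(1)$ is immediate from the definition (a nonempty relatively open subset of $\mathcal{M}$ is in particular nonempty, so $P(T)^{-1}(U)\cap V$ being relatively open already forces the ``or'' clause in the definition of $\mathcal{M}$ convex-cyclic transitive). For $(1)\Rightarrow(2)$: the definition of transitivity gives, for relatively open $U,V\subset\mathcal{M}$, a convex polynomial $P$ with either $U\cap P(T)(V)\neq\emptyset$ or $P(T)^{-1}(U)\cap V$ containing a nonempty relatively open subset of $\mathcal{M}$. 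In the second case we are essentially done after noting that $P(T)^{-1}(U)\cap V$ is relatively open in $\mathcal{M}$ (continuity of $P(T)$ on $\mathcal{H}$, intersected with $\mathcal{M}$); in the first case one picks $v\in V$ with $P(T)v\in U$ and uses continuity to produce a relatively open neighborhood of $v$ inside $V$ that $P(T)$ maps into $U$, which lies in $P(T)^{-1}(U)\cap V$. The subtlety, and what I expect to be the main obstacle, is that $P(T)$ need not map $\mathcal{M}$ into $\mathcal{M}$, so ``relatively open subset of $\mathcal{M}$'' in the codomain must be handled carefully: one works with $V\cap P(T)^{-1}(U)$ as a subset of $\mathcal{M}$ and uses that $P(T)|_{\mathcal{M}}:\mathcal{M}\to\mathcal{H}$ is continuous, so preimages of open sets are relatively open in $\mathcal{M}$.

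For $(2)\Rightarrow(3)$, the content is extracting the invariance condition $P(T)(\mathcal{M})\subset\mathcal{M}$. The idea is that if for every pair $U,V$ the set $P(T)^{-1}(U)\cap V$ is relatively open (hence, when nonempty, genuinely ``lives in'' $\mathcal{M}$ in the sense that $P(T)$ maps a relatively open piece of $\mathcal{M}$ into $U\subset\mathcal{M}$), then by letting $U$ range over a basis $\{\mathcal{B}_j\}$ of $\mathcal{M}$ and using a Baire-category / denseness argument one shows the set of points of $\mathcal{M}$ whose image under the relevant $P(T)$ lands in $\mathcal{M}$ is dense in $\mathcal{M}$; since $\mathcal{M}$ is closed and $P(T)$ continuous, $P(T)(\overline{\mathcal{M}})\subset\overline{P(T)(\text{dense set})}\subset\overline{\mathcal{M}}=\mathcal{M}$. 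Here one must be slightly careful that a single $P$ can be chosen working simultaneously in the required way, or equivalently reformulate so that the invariance is read off from requiring $P(T)^{-1}(U)\cap V$ to be relatively open \emph{in $\mathcal{M}$} for $U=V=\mathcal{M}$ itself, which forces $P(T)^{-1}(\mathcal{M})\cap\mathcal{M}$ to be all of $\mathcal{M}$, i.e. $P(T)(\mathcal{M})\subset\mathcal{M}$.

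For $(3)\Rightarrow(1)$: given $U,V$ relatively open in $\mathcal{M}$, choose $P$ with $P(T)^{-1}(U)\cap V\neq\emptyset$ and $P(T)(\mathcal{M})\subset\mathcal{M}$. Then $P(T)|_{\mathcal{M}}$ is a continuous self-map of $\mathcal{M}$, so $P(T)^{-1}(U)\cap V$ is the intersection of a relatively open set $P(T)|_{\mathcal{M}}^{-1}(U)$ with the relatively open set $V$, hence relatively open in $\mathcal{M}$; being also nonempty it contains a nonempty relatively open subset of $\mathcal{M}$, which is exactly the defining condition. Throughout, the recurring technical point—and the one deserving the most care—is keeping track of when ``open'' means open in $\mathcal{H}$ versus relatively open in $\mathcal{M}$, and exploiting the hypothesis $P(T)(\mathcal{M})\subset\mathcal{M}$ precisely where it is needed to make $P(T)$ behave as an endomorphism of $\mathcal{M}$; I would state a short preliminary observation to that effect and then the three implications become routine.
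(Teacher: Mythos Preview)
Your cycle runs in the opposite direction from the paper's ($(3)\Rightarrow(2)\Rightarrow(1)\Rightarrow(3)$), which would be fine, but two of your implications have a real gap centered on the same issue: you never produce the key argument that extracts the invariance $P(T)(\mathcal{M})\subset\mathcal{M}$ for the \emph{correct} polynomial $P$.

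In your $(1)\Rightarrow(2)$ you assert that $P(T)^{-1}(U)\cap V$ is relatively open in $\mathcal{M}$ ``by continuity of $P(T)$ on $\mathcal{H}$, intersected with $\mathcal{M}$.'' That does not follow. Since $U$ is only relatively open in $\mathcal{M}$, write $U=O\cap\mathcal{M}$ with $O$ open in $\mathcal{H}$; then $P(T)^{-1}(U)=P(T)^{-1}(O)\cap P(T)^{-1}(\mathcal{M})$, and the second factor is \emph{closed}, not open. So $P(T)^{-1}(U)\cap\mathcal{M}$ need not be relatively open unless you already know $P(T)(\mathcal{M})\subset\mathcal{M}$, which is exactly what you have not yet established. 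Your ``first case'' patch (continuity gives a neighborhood mapped into $U$) fails for the same reason: nearby points of $\mathcal{M}$ may be sent outside $\mathcal{M}$, hence outside $U$, no matter how close they are to $v$.

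In your $(2)\Rightarrow(3)$, the $U=V=\mathcal{M}$ trick does yield \emph{some} $P$ with $P(T)^{-1}(\mathcal{M})\cap\mathcal{M}$ open-and-closed in $\mathcal{M}$, hence all of $\mathcal{M}$; but condition~(3) demands, for each pair $U,V$, a single $P$ satisfying both $P(T)^{-1}(U)\cap V\neq\emptyset$ \emph{and} $P(T)(\mathcal{M})\subset\mathcal{M}$. The $P$ you get from $U=V=\mathcal{M}$ need not hit the given $U$ from the given $V$, and the $P$ that does need not be the invariant one. Your Baire sketch does not close this gap either.

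The missing idea, which the paper uses in its $(1)\Rightarrow(3)$, is a short linear-space argument: if $W\subset\mathcal{M}$ is nonempty and relatively open with $P(T)(W)\subset\mathcal{M}$, then $P(T)(\mathcal{M})\subset\mathcal{M}$. Indeed, pick $w_0\in W$; for any $x\in\mathcal{M}$ there is $r>0$ with $w_0+rx\in W$ (an open subset of the linear space $\mathcal{M}$ is absorbing at each of its points), so $P(T)(w_0)+rP(T)(x)=P(T)(w_0+rx)\in\mathcal{M}$ and $P(T)(w_0)\in\mathcal{M}$, whence $P(T)(x)\in\mathcal{M}$. Once you have this, the paper's order $(3)\Rightarrow(2)\Rightarrow(1)$ is immediate (invariance makes $P(T)|_{\mathcal{M}}$ a continuous self-map, so preimages of relatively open sets are relatively open), and $(1)\Rightarrow(3)$ is exactly the argument just described applied to the nonempty relatively open $W\subset P(T)^{-1}(U)\cap V$ supplied by the definition.
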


\begin{proof}
$(3) \Rightarrow (2)$ Since $P(T): \mathcal{M} \to \mathcal{M}$ is continuous and we know that $U$ is relatively open in $\mathcal{M}$, then
$P(T)^{-1}(U)$ is also relatively open in $\mathcal{M}$. Now, if we take any $V \vert_{open} \subset \mathcal{M}$, then let $W := P(T)^{-1}(U) \cap V$, which is open and $W \subset \mathcal{M}.$ \\

$(2) \Rightarrow (1)$  Since for each relatively open subsets $U$ and $V$, \;$P(T)^{-1}(U) \cap V$ is relatively open subset in $\mathcal{M}$, so 
$P(T)^{-1}(U) \cap V \neq \phi;$ now let $W := P(T)^{-1}(U) \cap V,$ then  $ \; W \vert_{open} \subset \mathcal{M}.$ \\

$(1) \Rightarrow (3)$ By definition of $\mathcal{M}$ convex-cyclic transitive, there exist $U$ and $V$ relatively open subsets in $\mathcal{M}$ such that $W:=P(T)^{-1}(U) \cap V \neq \phi, \;$and this set $ W $ is relatively open in $\mathcal{M}$, and $W \subset P(T)^{-1}(U)$. Then $P(T)(W) \subset U \; and \; U\subset \mathcal{M}$, so we get that

\begin{equation*}
 P(T)(W) \subset  \mathcal{M}. 
\end{equation*}
Let $x \in \mathcal{M}$, we must show that $P(T)(\mathcal{M}) \subset \mathcal{M}$.
Take $w_0 \in W,$ since $W$ is relatively open in $\mathcal{M}$ and $x \in \mathcal{M}$ so there exist $r > 0$ such that 
$w_0 + rx \in W$. \\
But $P(T)(W)\subseteq \mathcal{M},$ that is 
$P(T)(w_0 +rx)= P(T)(w_0)+rP(T)x \in \mathcal{M}$, so $P(T)(w_0) \in \mathcal{M}$ and 
$\mathcal{M}$ is subspace thus
$r^{-1}\left( -P(T)(w_0) + P(T)(w_0)+rP(T)(x) \right) \in \mathcal{M} $, that is $P(T)(x) \in \mathcal{M}$. This is true for any $x \in \mathcal{M},$ hence for $P(T)(x) \in \mathcal{M}$, that is $P(T)(\mathcal{M}) \subseteq \mathcal{M}.$
\end{proof}

\begin{theorem}
\label{theorem:2}
Let $T \in \textbf{B}(\mathcal{H})$ and let $\mathcal{M}$ be non-zero subspace of $\mathcal{H}$. If $T$ is $\mathcal{M}$ convex-cyclic transitive, then $T$ is $\mathcal{M}$ convex-cyclic.
\end{theorem}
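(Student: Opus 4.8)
The plan is to run a Baire category argument \emph{inside the subspace $\mathcal{M}$ itself}, combining the characterization of $CoC(T,\mathcal{M})$ from Theorem \ref{theorem:1} with the invariance statement extracted in Lemma \ref{lemma:1}. Since $\mathcal{M}$ is a closed subspace of the separable Hilbert space $\mathcal{H}$, it is itself a complete separable metric space, hence a Baire space, and it carries a countable basis $\{\mathcal{B}_j\}_{j\in\mathbb{N}}$ for its relative topology; this is precisely the family $\{\mathcal{B}_j\}$ appearing in Theorem \ref{theorem:1}.

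First I would fix $j$ and consider
$$
G_j := \bigcup_{\substack{P \in \mathcal{P} \\ P(T)(\mathcal{M}) \subseteq \mathcal{M}}} \bigl(P(T)|_{\mathcal{M}}\bigr)^{-1}(\mathcal{B}_j) \subseteq \mathcal{M},
$$
and check two properties. (i) $G_j$ is relatively open in $\mathcal{M}$: whenever $P(T)(\mathcal{M}) \subseteq \mathcal{M}$, the restriction $P(T)|_{\mathcal{M}}\colon \mathcal{M}\to\mathcal{M}$ is continuous, so $\bigl(P(T)|_{\mathcal{M}}\bigr)^{-1}(\mathcal{B}_j)$ is relatively open, and a union of relatively open sets is relatively open. (ii) $G_j$ is dense in $\mathcal{M}$: given a nonempty relatively open $V\subseteq\mathcal{M}$, apply Lemma \ref{lemma:1}(3) with $U:=\mathcal{B}_j$ and this $V$ to get $P\in\mathcal{P}$ with $P(T)^{-1}(\mathcal{B}_j)\cap V\neq\phi$ and $P(T)(\mathcal{M})\subseteq\mathcal{M}$; since $P(T)$ maps $\mathcal{M}$ into $\mathcal{M}$, one has $P(T)^{-1}(\mathcal{B}_j)\cap\mathcal{M}=\bigl(P(T)|_{\mathcal{M}}\bigr)^{-1}(\mathcal{B}_j)$, whence $G_j\cap V\neq\phi$. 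So $G_j$ meets every nonempty relatively open subset of $\mathcal{M}$.

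Next I would invoke the Baire category theorem in $\mathcal{M}$: the countable intersection $\bigcap_{j=1}^{\infty} G_j$ is a dense $G_\delta$ subset of $\mathcal{M}$, in particular nonempty. Pick $x\in\bigcap_j G_j$. Then $x\in\mathcal{M}$ and for every $j$ there is a convex polynomial $P$ with $P(T)x\in\mathcal{B}_j$, so $\widehat{Orb(T,x)}\cap\mathcal{M}$ meets every basic set $\mathcal{B}_j$ and is therefore dense in $\mathcal{M}$; equivalently, since $\bigl(P(T)|_{\mathcal{M}}\bigr)^{-1}(\mathcal{B}_j)\subseteq P(T)^{-1}(\mathcal{B}_j)$, the vector $x$ lies in $\bigcap_{j=1}^{\infty}\bigcup_{P\in\mathcal{P}}P(T)^{-1}(\mathcal{B}_j)=CoC(T,\mathcal{M})$ by Theorem \ref{theorem:1}. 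Hence $CoC(T,\mathcal{M})\neq\phi$ and $T$ is $\mathcal{M}$ convex-cyclic.

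The only genuinely delicate point is the one already isolated in Lemma \ref{lemma:1}: the preimages $P(T)^{-1}(\mathcal{B}_j)$ are a priori only subsets of $\mathcal{H}$ and need not be open there, so the category argument cannot be carried out naively in $\mathcal{H}$. What rescues it is the invariance $P(T)(\mathcal{M})\subseteq\mathcal{M}$ guaranteed by Lemma \ref{lemma:1}(3), which lets one restrict $P(T)$ to a continuous self-map of the Baire space $\mathcal{M}$ and thus ensures each $G_j$ is open there. One should also note that the polynomial $P$ furnished by Lemma \ref{lemma:1}(3) depends on both $j$ and $V$, which is harmless: the density step only needs the union defining $G_j$ to be dense, not any individual term.
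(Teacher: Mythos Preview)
Your argument is correct and is precisely the Baire-category argument the paper leaves implicit: the paper's own proof is the single line ``It is a direct consequence of proofs of Lemma~\ref{lemma:1} and Theorem~\ref{theorem:1}'', and you have unpacked exactly that combination, using Theorem~\ref{theorem:1} for the $G_\delta$ description of $CoC(T,\mathcal{M})$ and Lemma~\ref{lemma:1}(3) to secure both the density and, via the invariance $P(T)(\mathcal{M})\subseteq\mathcal{M}$, the relative openness of each $G_j$ inside the Baire space $\mathcal{M}$. Your explicit attention to why the argument must be run in $\mathcal{M}$ rather than in $\mathcal{H}$ is a genuine clarification over what the paper states.
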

\begin{proof}
It is a direct consequence of proofs of Lemma \ref{lemma:1} and Theorem \ref{theorem:1}
\end{proof}

\begin{remark}
It is natural to ask if the converse of Theorem \ref{theorem:2} is true or not. We will answer this question later in Proposition \ref{proposition: 4.8}.
\end{remark}

\section{Subspace Convex-Cyclic Criterion}
This section is devoted to introduce a type of Kitai's criterion \cite{Int3}, which is a sufficient criterion for an operator to be $\mathcal{M}$ convex-cyclic. Also we will relate it with invariant subspaces and we will see that the converse of Theorem \ref{theorem:2} in general is not true.

\begin{theorem}
\label{th: cri1}
Let $T \in \textbf{B}(\mathcal{H})$ and let $\mathcal{M}$ be a non-zero subspace of $\mathcal{H}$. Assume that there exist $X$ and $Y$, dense subsets of $\mathcal{M}$ 
such that for every $x \in X$ and $y \in Y$ there exist a sequence $\{P_k\}_{k \geq 1}$ of convex polynomials such that
\begin{enumerate}
\item
$P_k(T)x \to 0,\: \forall x \in X$,
\item
for each $y \in Y$, there exists a sequence $\{x_k \}$ in $\mathcal{M}$ such that
$x_k \to 0$  and $ P_k(T)x_k \to y,$
\item
$\mathcal{M}$ is an invariant subspace for $P_k(T)$ for all $k \geq 0 $.
\end{enumerate}
Then $T$ is $\mathcal{M}$ convex-cyclic operator.
\end{theorem}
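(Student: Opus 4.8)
The plan is to derive this from Theorem~\ref{theorem:2} rather than re-run a Baire-category argument: I will check that the three hypotheses force $T$ to be $\mathcal{M}$ convex-cyclic transitive, and in fact to satisfy condition~(3) of Lemma~\ref{lemma:1}. So fix arbitrary non-empty relatively open subsets $U$ and $V$ of $\mathcal{M}$; the goal is to exhibit a single convex polynomial $P$ with $P(T)^{-1}(U)\cap V\neq\phi$ and $P(T)(\mathcal{M})\subseteq\mathcal{M}$.

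First I would invoke density of the witnessing sets: since $X$ and $Y$ are dense in $\mathcal{M}$ and $U,V$ are non-empty and relatively open, choose $x\in X\cap V$ and $y\in Y\cap U$. Apply the hypotheses to this $x$ and this $y$: we obtain a sequence $\{P_k\}$ of convex polynomials with $P_k(T)x\to 0$ (condition~1), a sequence $\{x_k\}\subseteq\mathcal{M}$ with $x_k\to 0$ and $P_k(T)x_k\to y$ (condition~2), while every $P_k(T)$ leaves $\mathcal{M}$ invariant (condition~3).

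The key step is then the standard Kitai/Gethner--Shapiro mixing trick: set $v_k:=x+x_k$. Because $x,x_k\in\mathcal{M}$ and $\mathcal{M}$ is a subspace, $v_k\in\mathcal{M}$; moreover $v_k\to x\in V$, so relative openness of $V$ gives some $k_1$ with $v_k\in V$ for all $k\ge k_1$. By linearity $P_k(T)v_k=P_k(T)x+P_k(T)x_k\to 0+y=y\in U$, and $P_k(T)v_k\in\mathcal{M}$ by the invariance in condition~(3), so relative openness of $U$ in $\mathcal{M}$ gives some $k_2$ with $P_k(T)v_k\in U$ for all $k\ge k_2$. Fix any $k\ge\max\{k_1,k_2\}$ and put $P:=P_k$: then $v_k\in V$ and $v_k\in P(T)^{-1}(U)$, hence $P(T)^{-1}(U)\cap V\neq\phi$, and $P(T)(\mathcal{M})\subseteq\mathcal{M}$ by condition~(3). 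This is precisely statement~(3) of Lemma~\ref{lemma:1}, so $T$ is $\mathcal{M}$ convex-cyclic transitive, and Theorem~\ref{theorem:2} yields that $T$ is $\mathcal{M}$ convex-cyclic.

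The argument is short once assembled; the points that need genuine care are the bookkeeping of which topology is in play (everything must be relative to $\mathcal{M}$, so that convergence of $v_k$ and of $P_k(T)v_k$ eventually lands in $V$ and in $U$ respectively) and the verification that both $v_k$ and $P_k(T)v_k$ remain in $\mathcal{M}$ --- this is exactly where the invariance hypothesis~(3) is indispensable, and it is the reason a direct Baire-category proof through Theorem~\ref{theorem:1} is less economical: there one would still need invariance to get that each $\bigcup_{P\in\mathcal{P}}P(T)^{-1}(\mathcal{B}_j)$ is relatively open in $\mathcal{M}$, and its density is nothing but the same mixing computation. Hence routing through Lemma~\ref{lemma:1} and Theorem~\ref{theorem:2} is the cleanest path, and I expect the only mild obstacle to be stating the relative-topology details carefully.
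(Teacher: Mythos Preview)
Your proposal is correct and follows essentially the same route as the paper: both arguments verify condition~(3) of Lemma~\ref{lemma:1} by choosing $x\in X\cap V$, $y\in Y\cap U$, forming $x+x_k$, and using the three hypotheses to land this element in $V$ and its image under $P_k(T)$ in $U$, then invoking Theorem~\ref{theorem:2}. The only difference is cosmetic---you phrase the ``eventually in $V$ (resp.\ $U$)'' step via sequential convergence and relative openness, whereas the paper writes out explicit $\epsilon$-ball estimates.
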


\begin{proof}
To prove that $T$ is $\mathcal{M}$ convex-cyclic operator we will use Lemma \ref{lemma:1} and Theorem \ref{theorem:2}. Let $ U $ and $V$ be non-empty relatively open subsets of $\mathcal{M}$. We will show that there exists $k \geq 0 $ such that $P_k(T)(U) \cap V \neq \phi$. Since $X$ and $Y$ are dense in $\mathcal{M}$, there exists $v \in X \cap V$ and $ u \in Y \cap U$. Furthermore, since $U$ and $V$ are relatively open,
there exists $ \epsilon > 0$ such that the $\mathcal{M}$-ball centered at $v$ of radius $\epsilon$ is contained in $V$ and the $\mathcal{M}$-ball centered at $u$ of radius $\epsilon$ is contained in $U$.
By hypothesis, given these $ v \in X$ and $u \in Y$ , one can choose $k$ large enough such that there exists $x_k \in \mathcal{M}$ with \\
$\Vert P_k(T) v \Vert < \frac{\epsilon}{2}$, \quad $\Vert x_k \Vert < \epsilon$ \quad and \quad $ \Vert P_k(T) x_k - u \Vert < \frac{\epsilon}{2}.$ We have:
\begin{enumerate}
\item
Since $v \in \mathcal{M}$ and $x_k \in \mathcal{M}$, it follows that $v+x_k \in \mathcal{M}$. Also, since 
$$ \Vert (v + x_k) - v \Vert  =  \Vert x_k \Vert  < \epsilon ,$$
it follows that $v+x_k $ is in $\mathcal{M} - ball$ centered at $v$ of radius $\epsilon$ and hence $v+x_k \in V.$
\item
Since $v$ and $x_k$ are in $\mathcal{M}$ and $\mathcal{M}$ is invariant under $P_k(T),$ it follows that $P_k(T)(v+x_k) \in \mathcal{M}.$ Also
$$ \Vert P_k(T)(v+x_k) - u \Vert \leq \Vert   P_k(T)(v)\Vert + \Vert P_k(T)(x_k) - u \Vert < \frac{\epsilon}{2} +  \frac{\epsilon}{2} = \epsilon,$$
and hence\\
 $ P_k(T)(v+x_k) $ is in the $\mathcal{M} - ball$ centered at $u$ of radius $\epsilon$ and thus $ P_k(T)(v+x_k) \in U$.
\end{enumerate}
So by steps (1) and (2), $T$ is $\mathcal{M}$ convex-cyclic transitive and by Theorem \ref{theorem:2} we get that $v+x_k \in P_k(T)^{-1}(U) \cap V$,
that is $ P_k(T)^{-1}(U) \cap V \neq \phi$ which means $T$ is $\mathcal{M}$ convex-cyclic operator.
\end{proof}

As we mentioned before, most of the papers related to the topic of convex-cyclic operators depend on the Kitai's \cite{Int3} criterion, which does not contain an analogue of our condition 3. We clarify the need of this additional condition in details in Example \ref{example: on cri2}.

\begin{theorem}
\label{th: cri2}
Let $T \in \textbf{B}(\mathcal{H})$ and let $\mathcal{M}$ be a non-zero subspace of $\mathcal{H}$. Assume there exist $X$ and $Y$, 
subsets of $\mathcal{M}$ where just $Y$ is dense in $\mathcal{M}$ such that for every $x \in X$ and $y \in Y$ there exist a sequence $\{P_k\}_{k \geq 0}$ of convex polynomials such that
\begin{enumerate}
\item
$P_k(T)x \to 0,\: \forall x \in X$
\item
for each $y \in Y$, there exists a sequence $\{x_k \}$ in $\mathcal{M}$ such that
$x_k \to 0$  and $ P_k(T)x_k \to y,$
\item
$X \subset \bigcap \limits_{k=1}^{\infty} P_k(T)^{-1}(\mathcal{M})$.
\end{enumerate}
Then $T$ is $\mathcal{M}$ convex-cyclic operator.
\end{theorem}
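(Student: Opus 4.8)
The plan is to produce an explicit $\mathcal{M}$ convex-cyclic vector directly, rather than to argue transitivity, since transitivity would require $X$ to be dense. By Theorem~\ref{theorem:1}, a vector $x$ lies in $CoC(T,\mathcal{M})$ exactly when for every basic relatively open set $\mathcal{B}_j$ there is a convex polynomial $P$ with $P(T)x\in\mathcal{B}_j$. Since $\mathcal{M}$ is separable and $Y$ is dense in it, I would fix a sequence $(w_m)_{m\geq1}$ with $w_m\in Y$ that is dense in $\mathcal{M}$, together with a sequence $\epsilon_m\downarrow0$, arranged (by the usual enumeration with repetitions) so that hitting each $w_m$ to within $\epsilon_m$ forces density in every $\mathcal{B}_j$. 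For each $m$ let $(x_k^{(m)})_k$ be the sequence in $X$ given by~(2) for $y=w_m$, so that $x_k^{(m)}\to0$ and $P_k(T)x_k^{(m)}\to w_m$. The candidate convex-cyclic vector is the absolutely convergent series $x=\sum_{m\geq1}x_{k_m}^{(m)}$, with $k_1<k_2<\cdots$ chosen inductively as below. Note that $x\in\mathcal{M}$ because $\mathcal{M}$ is closed, and, by~(3), $P_k(T)x_{k_m}^{(m)}\in\mathcal{M}$ for all $k,m$, so every partial sum of $P_{k_m}(T)x=\sum_j P_{k_m}(T)x_{k_j}^{(j)}$ lies in $\mathcal{M}$ and hence so does $P_{k_m}(T)x$.

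Having fixed $k_1<\cdots<k_{m-1}$ --- and hence the vectors $x_{k_1}^{(1)},\dots,x_{k_{m-1}}^{(m-1)}$ and the operators $P_{k_1}(T),\dots,P_{k_{m-1}}(T)$ --- I would pick $k_m>k_{m-1}$ so large that: (a) $\Vert P_{k_m}(T)x_{k_m}^{(m)}-w_m\Vert<\epsilon_m$, possible by~(2); (b) $\Vert P_{k_m}(T)x_{k_j}^{(j)}\Vert<2^{-j}\epsilon_m$ for every $j<m$, possible by~(1); and (c) $\Vert x_{k_m}^{(m)}\Vert\leq 2^{-m}(1+\max_{i<m}\Vert P_{k_i}(T)\Vert)^{-1}$, possible because $x_k^{(m)}\to0$. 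Constraint~(c), imposed at every stage, makes the series for $x$ converge and also yields, for each fixed $m$, the estimate $\sum_{j>m}\Vert P_{k_m}(T)x_{k_j}^{(j)}\Vert\leq\sum_{j>m}2^{-j}=2^{-m}$, since $\Vert P_{k_m}(T)\Vert\leq\max_{i<j}\Vert P_{k_i}(T)\Vert$ whenever $j>m$. Splitting $P_{k_m}(T)x$ into the ``earlier'' terms ($j<m$, small by~(b)), the diagonal term ($j=m$, close to $w_m$ by~(a)), and the ``later'' terms ($j>m$, just estimated) gives $\Vert P_{k_m}(T)x-w_m\Vert<2\epsilon_m+2^{-m}$, while $P_{k_m}(T)x\in\mathcal{M}$ as observed above. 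Since $(w_m)$ is dense in $\mathcal{M}$ and $2\epsilon_m+2^{-m}\to0$, the set $\widehat{Orb(T,x)}\cap\mathcal{M}$ is dense in $\mathcal{M}$, so $x$ is an $\mathcal{M}$ convex-cyclic vector and $T$ is $\mathcal{M}$ convex-cyclic.

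The one genuinely delicate point is step~(b): when $k_m$ is chosen, the vectors $x_{k_j}^{(j)}$ with $j<m$ are already frozen, while the norms $\Vert P_{k_m}(T)\Vert$ are necessarily unbounded (Proposition~\ref{Pro: 1}), so these cross-terms can be controlled only through hypothesis~(1) --- which is why the backward sequences from~(2) must be taken in $X$ (equivalently, by~(3), inside $\bigcap_k P_k(T)^{-1}(\mathcal{M})$); note that this same membership is precisely what was used to keep the partial sums of $P_{k_m}(T)x$ inside $\mathcal{M}$, so condition~(3) does double duty. Everything else --- absolute convergence of $x$ from~(c), passing $P_{k_m}(T)$ through the series by linearity and continuity, and closedness of $\mathcal{M}$ --- is routine, and in particular no Baire-category argument and no appeal to $\mathcal{M}$ convex-cyclic transitivity is needed.
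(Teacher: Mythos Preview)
Your approach is essentially the same as the paper's: both construct the $\mathcal{M}$ convex-cyclic vector directly as an absolutely convergent series $x=\sum_j x_{k_j}$ with the indices chosen inductively so that the diagonal term is close to the target, the earlier cross-terms are small via~(1), and the later cross-terms are small because the $x_{k_j}$ are small, and both rely on the (tacit) reading that the sequences furnished by condition~(2) actually lie in $X$ rather than merely in $\mathcal{M}$, so that~(1) and~(3) can be applied to them. The only difference is bookkeeping --- the paper packages all the estimates into a single auxiliary sequence $(\xi_j)$ with $j\xi_j+\sum_{i>j}\xi_i\to0$, whereas you use explicit powers of $2$ together with operator-norm bounds for the ``later'' terms --- but the underlying argument is identical.
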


\begin{proof}
We will use the idea from \cite{4}, namely let $\{ \xi_j \}^{\infty}_{j=1}$ be a sequence of positive number such that
$$\lim \limits_{ j \to \infty} \left( j \xi_j + \sum^{\infty}_{i=j+1} \xi_i \right)=0. $$
In fact, from condition (1) for all $ \lambda_j > 0, \quad \Vert P_{k_j}(T)(x) \Vert < \lambda_j $, \\
and from condition (2) for all $ \epsilon_j > 0 \quad \Vert P_{k_j}(T)(x_j)-y_j \Vert < \epsilon_j $. \\
So, we can define a sequence of positive numbers $\{ \xi_j \}^{\infty}_{j=1}$ as follows: \\
$\xi_i = \lambda_i \quad for \; i=1,2, \cdots , j.$ And $\xi_i = \epsilon_i \quad for \; i = j+1 , \cdots $ such that \\
$$\lim \limits_{ j \to \infty} \left( j \xi_j + \sum^{\infty}_{i=j+1} \xi_i \right)=0.$$
Since $\mathcal{H}$ is separable, we can assume that $Y= \{ y_j \}^{\infty}_{j=1} $ for some sequence $ \{ y_j \}^{\infty}_{j=1}.$ We can construct a sequence $ \{ x_j \}^{\infty}_{j=1} \subset X $ and $ \{ k_j \}^{\infty}_{j=1} $ of $ \{ k \}^{\infty}_{j=1} $ by induction. Let $x_1 \in X$ and $k_1$ be such that 
$\Vert x_1 \Vert + \Vert P_{k_1} (T)(x_1) - y_1  \Vert < \xi_1. $ for each $j$ choose $k_j$ and $x_j \in X $ such that  
$\Vert x_j \Vert + \Vert P_{k_j} (T)(x_i) \Vert  + \Vert P_{k_i}(T)(x_j) \Vert + \Vert P_{k_j}(T)(x_j) - y_j \Vert < \xi_j$   \quad for all $i <j. $ 
Since
$\sum \limits_{i=1}^{\infty} \Vert x_i \Vert <  \sum \limits_{i=1}^{\infty} \xi_i ,$ we can let 
 $x= \sum \limits_{i=1}^{\infty} x_i,$ and $x$ is well defined.\\
 From condition (3) for every $j$ we have $P_{k_j} (T)(x) \in \mathcal{M}$ and
\begin{align*}
\Vert P_{k_j} (T)(x) - y_j \Vert  & = \Vert P_{k_j} (T)(x_j) - y_j  + \sum \limits_{i=1}^{j-1}P_{k_j} (T)(x_i)+ \sum \limits_{i=j+1}^{\infty}P_{k_j} (T)(x_i) \Vert \\
& \leq \Vert P_{k_j} (T)(x_j) - y_j \Vert  + \sum \limits_{i=1}^{j-1} \Vert P_{k_j} (T)(x_i) \Vert + \sum \limits_{i=j+1}^{\infty} \Vert P_{k_j} (T)(x_i) \Vert. \\
& \leq j\xi_j + \sum \limits_{i=j+1}^{\infty} \xi_i .
\end{align*}
Thus $\lim \limits_{j \to \infty} \Vert P_{k_j} (T)(x) - y_j \Vert =0$, that is there exists $x \in X \subseteq \mathcal{M}$ such that $ \widehat{Orb(T,x)} \cap \mathcal{M} $ is dense in $\mathcal{M}$. Then $T$ is $\mathcal{M}$ convex-cyclic operator.
\end{proof}

\begin{remark}
\label{remark:N2C}
Notice that in Theorem \ref{th: cri2} the set $X$ is not dense in $\mathcal{M}$.
\end{remark}

%\begin{remark}
Similar to Theorem 3, in previous theorem conditions $(1)$ and $(2)$ are not sufficient for $T$ to be $\mathcal{M}$ convex-cyclic operator, which we show in Example \ref{example: on cri2} in the next section.
%\end{remark}
\section{Examples}
Before we start this section\footnote{All examples in $\ell^p(\mathbb{N})$ space are based on \cite{Int2}.}, we need some notions. Let $(n_k)_{k=1}^{\infty}$ and $(m_k)_{k=1}^{\infty}$ be increasing sequences of of positive integers such that $ n_k < m_k < n_{k+1} \, $ for all $k.$ In $\ell^p,\, p \geq 1,$ denote by $\{ e_n \}_{n=1}^{\infty}$ the canonical basis for $\ell^p,$ let $B$, $B e_n = e_{n-1},$ be the backward shift operator. Consider the closed linear subspace $\mathcal{M}$ generated by the set $\{ e_j: n_k \leq j \leq m_k, \; k \geq 1 \}$. As an example case we could set $n_k = \{ 1, 3, 9, 27, 81, \cdots \}$ and $m_k = \{ 2, 4, 5, 6, 7, 8, 10, 11, \cdots, 26, \cdots, 80, \cdots \}$, then we would obtain  $\mathcal{M}:= \{e_1, e_2, e_3, \cdots \}= \{ e_j : j \in \mathbb{N} \}. $ 

The following two lemmas show that being $\mathcal{M}$ convex-cyclic operator does not imply $\mathcal{M}$ convex-cyclic transitivity; we have used similar arguments as in \cite{3}. 

\begin{lemma}
\label{Lemma: sup=infty1}
If $\sup_{k \geq 1}(m_k - n_k)=\infty , $\; then $T=2B$ is $\mathcal{M}$ convex-cyclic operator.
\end{lemma}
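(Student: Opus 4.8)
The plan is to verify the hypotheses of Theorem~\ref{th: cri2} (the version of the criterion that does not require $X$ to be dense) for the operator $T = 2B$ acting on the closed subspace $\mathcal{M}$ generated by $\{e_j : n_k \le j \le m_k,\ k\ge 1\}$. Since each convex polynomial we use will in fact be a single monomial $P_k(T) = (2B)^{r_k}$, condition~(3) reduces to checking that the relevant shifts of the basis vectors in $X$ land back inside $\mathcal{M}$, which is why the gap hypothesis $\sup_{k\ge 1}(m_k - n_k) = \infty$ is the crucial ingredient.

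First I would set up the two dense/auxiliary sets. Let $Y$ be the linear span (over the rationals, say, to keep it countable) of the basis vectors $\{e_j : n_k \le j \le m_k\}$ that generate $\mathcal{M}$; this is dense in $\mathcal{M}$ by construction. For $X$ I would take a suitable subset of finitely-supported vectors in $\mathcal{M}$ — concretely, vectors supported on blocks $[n_k, m_k]$ — chosen so that applying a large shift $(2B)^{r}$ both kills them (for condition~(1), since $2B$ is a weighted backward shift and enough applications shift the support off the end, sending the vector to $0$) and keeps intermediate iterates inside $\mathcal{M}$ (condition~(3)). Next, for a fixed $y \in Y$ supported in some block $[n_{k_0}, m_{k_0}]$, I would produce the sequence $\{x_k\}$ of condition~(2): because $\sup(m_k - n_k) = \infty$, there are blocks $[n_k, m_k]$ of arbitrarily large length, so for large $k$ I can find a vector $x_k \in \mathcal{M}$ supported deep inside such a long block such that $(2B)^{r_k} x_k = y$ for an appropriate $r_k$; the factor $2^{-r_k}$ in the preimage forces $\|x_k\| \to 0$, giving $x_k \to 0$ while $P_{k}(T)x_k \to y$. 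The length growth is exactly what guarantees that the intermediate iterates $(2B)^{j}x_k$ for $0 \le j \le r_k$ remain supported within the block, hence in $\mathcal{M}$, so condition~(3) holds for this construction as well.

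The main obstacle I anticipate is the bookkeeping in condition~(3): one must choose the shift exponents $r_k$ and the supports of the vectors in $X$ and of the $x_k$ so that \emph{every} intermediate power $(2B)^j$ applied to them stays inside $\mathcal{M}$, not merely the final power. This is delicate because $\mathcal{M}$ is the span of a union of blocks with gaps between them, so a backward shift can easily move support from an allowed block into a forbidden gap; avoiding this requires that the vectors be supported only near the \emph{right ends} of sufficiently long blocks and that the shift amounts be tuned to the block lengths. Once the supports are chosen compatibly with the block structure — using the unbounded gaps hypothesis to always have enough room — conditions~(1), (2), (3) of Theorem~\ref{th: cri2} follow from routine estimates on the weighted shift, and that theorem then yields that $T = 2B$ is $\mathcal{M}$ convex-cyclic. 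I would also remark in passing that, by contrast, $\mathcal{M}$ convex-cyclic transitivity will fail here (to be shown in the companion lemma), which is the point of presenting this example.
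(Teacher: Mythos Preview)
Your overall plan---invoke Theorem~\ref{th: cri2} with $Y$ a countable dense set of finitely supported vectors in $\mathcal{M}$, take the convex polynomials to be monomials $T^{r_k}=(2B)^{r_k}$, and manufacture the $x_k$ as scaled forward shifts of $y$ into long blocks---is exactly the paper's route. Two points, however, are genuine gaps rather than routine bookkeeping.

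First, you restrict each $y\in Y$ to be supported in a single block $[n_{k_0},m_{k_0}]$, but such vectors are not dense in $\mathcal{M}$ (for instance $e_{n_1}+e_{n_2}$ is not approximable by single-block vectors). In the paper $Y=(y_j)\subset c_{00}\cap\mathcal{M}$ is an arbitrary dense sequence; each $y_j$ may spread across several blocks, and one uses only that its support lies in some initial segment $[1,|y_j|]$. Second---and this is the real obstacle---your claim that the intermediate iterates $(2B)^j x_k$ for $0\le j\le r_k$ ``remain supported within the block'' is false in the relevant situation: if $x_k$ sits in a late long block and $(2B)^{r_k}x_k=y$ lies in earlier blocks, then as $j$ increases the support of $(2B)^j x_k$ slides backward \emph{through the gaps} between blocks and leaves $\mathcal{M}$. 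Condition~(3) does not demand that all powers land in $\mathcal{M}$, only the chosen sequence $P_k(T)=T^{r_k}$; but precisely because only certain shift amounts are safe, the set $X$ and the exponents must be constructed \emph{together}. The paper does this inductively: it sets $X=\{S^{N_i}y_j:\ i\ge j\}$ and chooses the $N_j$ so that (i) $n_{k_j}<N_j$ and $N_j+|y_i|<m_{k_j}$ for all $i\le j$, which places each $S^{N_j}y_i$ inside the single block $[n_{k_j},m_{k_j}]$, and (ii) $N_j-N_i>n_{k_j}$ for $i<j$, which ensures that applying $T^{N_l}$ to $S^{N_i}y_j$ either keeps the support inside $[n_{k_i},m_{k_i}]$ (when $l\le i$) or overshoots far enough to return a piece of $y_j\in\mathcal{M}$ (when $l>i$). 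Your sketch does not supply this coupling, and without it condition~(3) cannot be verified.
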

% DFA
\begin{proof}
Let $Y= (y_j)_{j=1}^{\infty} \subset c_{00} \cap \mathcal{M}$ be a dense subset of $\mathcal{M}$. Since 
$\sup_{k \geq 1}(m_k - n_k)=\infty , $\; for $y_1$ there exist $k_1$ 
and $ N_1 $ 
such that \,                       $| y_1 | < n_{k_1} < N_1 < N_1 + |y_1| < m_{k_1}.$\,By
induction, it is easy to see that there exist increasing sequences $(N_j)_{j=1}^{\infty}$\, $(k_j)_{j=1}^{\infty}$ \, such that for every fixed $j>1$ we have
\begin{enumerate}
\item 
$| y_j | < n_{k_j} < N_j < N_j + |y_i| < m_{k_j}.$\, for all \, $1 \leq i \leq j,$
\item
$N_j - N_i > n_{k_j} $\, for all \, $1 \leq i < j,$
\end{enumerate}
Let $$X= \bigcup \limits_{j \geq 1} \lbrace S^{N_i} y_j : i \geq j \rbrace .$$ It is clear that $X \subset \mathcal{M}.$ To verify that \,$T, X, Y$\, and $(N_j)_{j=1}^{\infty}\, $ satisfy all conditions of Theorem \ref{th: cri2}, hence that $T$ is $\mathcal{M}$ convex-cyclic operator. Depending on the Example \ref{example: on cri2} conditions 1 and 2 are holds. 
It is enough to check that $$X \subset \bigcap \limits_{j=1}^{\infty} P_{N_j}(T)^{-1}(\mathcal{M}).$$
Let $x= S^{N_i} y_j \in X,$ where $i \geq j.$ For every $l$ consider $P_{N_l}(T)(x).$
If $l > i,$ so
\begin{align*}
 p_{N_l}(T)(x) &= \sum _{\lambda=0}^{N_l} a_\lambda T^\lambda x \\
 &= \sum _{\lambda=0}^{N_{i-1}} a_\lambda T^\lambda x + a_{N_i} T^{N_i} x + \sum _{\lambda=N_{i+1}}^{N_l} a_\lambda T^\lambda x \\
 &= \sum _{\lambda=0}^{N_{i-1}} a_\lambda T^\lambda x + a_{N_i} y_i + 0 \quad \text{Since $T$ is backward shift operator and $x=S^{N_i}y_j$}   \\
 &= \sum _{\lambda=0}^{N_{l-1}} a_\lambda S^{N_i - \lambda} y_j + a_{N_i} y_j.
\end{align*}
Since $N_i - \lambda \geq N_i - N_{i-1} > n_{k_i}$ and $N_i - N_{i-1} < N_i - \lambda + |y_j| < N_i + |y_j| < m_{k_i} $.\\
So $ n_{k_i} < N_i - \lambda  < m_{m_k}$. \\
Hence $p_{N_l}(T)(x) = \sum _{\lambda=0}^{N_{l-1}} a_\lambda S^{N_i - \lambda} y_j + a_{N_i} y_j \in Lin\{e_r: n_{k_i} \leq r \leq m_{k_i}  \} + \mathcal{M} \subseteq \mathcal{M}. $
That is $ p_{N_l}(T)(x) \in \mathcal{M}, $ hence $x \in p_{N_l}(T)^{-1}\left( \mathcal{M} \right).$\\
If $l < i,$ then 
\begin{align*}
p_{N_l}(T)(x) & = \sum _{\lambda=0}^{N_l} a_\lambda T^\lambda x  \\
& = \sum _{\lambda=0}^{N_l} a_\lambda S^{-\lambda} x = \sum _{\lambda=0}^{N_l} a_\lambda S^{N_{i}-\lambda} y_j 
\end{align*}
But $ N_i-\lambda \geq N_i - N_l > n_{k_i},$ and $N_i - N_l \leq N_i - \lambda + |y_l| < N_i +|y_l| < m_{k_i}$.\\
So $ n_{k_i} < N_i - \lambda  < m_{k_i}$ \\
Hence $ p_{N_l}(T)(x) = \sum _{\lambda=0}^{N_l} a_\lambda S^{N_{i}-\lambda} y_j \in Lin\{e_r: n_{k_i} \leq r \leq m_{k_i}  \}  \subset \mathcal{M}, $ hence $x \in p_{N_l}(T)^{-1}\left( \mathcal{M} \right).$\\
If $l = i,$ then
\begin{align*}
p_{N_l}(T)(x) & = \sum _{\lambda=0}^{N_l} a_\lambda T^\lambda x \\
& = \sum _{\lambda=0}^{N_{l-1}} a_\lambda T^{\lambda} x + a_{N_l} T^{N_{l}} x \\
&=  \sum _{\lambda=0}^{N_{l-1}} a_\lambda S^{N_i\lambda} (y_j)+ a_{N_l} S^{N_i-N_l} (y_j) \\
&=  \sum _{\lambda=0}^{N_{l-1}} a_\lambda S^{N_i - \lambda} (y_j)+ a_{N_l}(y_j) .
\end{align*}
Since $N_i - \lambda \geq N_i - N_{l-1} > N_{k_i}$ and $N_i - N_{l-1} \leq N_i - \lambda + |y_l| < N_i + |y_l| < m_{k_i}$. \\
So $ n_{k_i} < N_i - \lambda  < m_{k_i}$. \\
Hence $ p_{N_l}(T)(x) = \sum _{\lambda=0}^{N_{l-1}} a_\lambda S^{N_i - \lambda} (y_j)+ a_{N_l}(y_j) 
Lin\{e_r: n_{k_i} \leq r \leq m_{k_i}  \} + \mathcal{M} \subset \mathcal{M} . $ \\
That is $x \in p_{N_l}(T)^{-1}\left(\mathcal{M} \right).$\\
Consequently, $T$ satisfies all conditions in Theorem \ref{th: cri2}, so $T$ is $\mathcal{M}$ convex-cyclic operator.
\end{proof}

\begin{lemma}
\label{Lemma: sup=infty2}
If $\sup_{k \geq 1}(n_{k+1} - m_k)=\infty , $\; then $T=2B$ is not  $\mathcal{M}$ convex-cyclic transitive.
\end{lemma}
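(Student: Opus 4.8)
The plan is to argue by contradiction: assume $T=2B$ is $\mathcal{M}$ convex-cyclic transitive and derive an incompatibility with the large gaps $n_{k+1}-m_k$. By Lemma \ref{lemma:1}, transitivity gives, for every pair of relatively open $U,V\subset\mathcal{M}$, a convex polynomial $P$ with $P(T)(\mathcal{M})\subseteq\mathcal{M}$ and $P(T)^{-1}(U)\cap V\neq\phi$. So the first step is to record the structural constraint that $P(T)(\mathcal{M})\subseteq\mathcal{M}$ imposes on the coefficients of $P$. Writing $P(t)=\sum_{\lambda=0}^{d}a_\lambda t^\lambda$ with $\sum a_\lambda=1$, and using $T=2B$ so that $T^\lambda e_j = 2^\lambda e_{j-\lambda}$ (and $=0$ for $\lambda\ge j$), the condition $P(T)e_j\in\mathcal{M}$ for every basis vector $e_j$ with $n_k\le j\le m_k$ forces: whenever $e_j$ is ``in'' $\mathcal{M}$ but $e_{j-\lambda}$ is ``not in'' $\mathcal{M}$ (i.e. $j-\lambda$ falls in a gap $(m_i,n_{i+1})$), the coefficient $a_\lambda$ must vanish. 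The key point to extract is that if the gaps $n_{k+1}-m_k$ are unbounded, then for any fixed degree $d$ one can find a block $[n_k,m_k]$ sitting just above a gap longer than $d$; for $j=n_k$, every shift $j-\lambda$ with $1\le\lambda\le d$ lands in that gap, hence $a_1=\dots=a_d=0$ and $P(t)=a_0=1$, i.e. $P(T)=I$ on that block. More globally, one shows that $\mathcal{M}$-invariant convex polynomials are severely limited: their action on $\mathcal{M}$ is a constrained combination that can never mix a high-index block down into a low-index block across a long gap.

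The second step is to exhibit two relatively open sets $U,V\subset\mathcal{M}$ that cannot be linked by any such admissible $P$. Take $V$ a small relative ball around $e_{n_1}$ (a low-index basis vector, $n_1$ the first left endpoint) and $U$ a small relative ball around $e_{j_0}$ for a basis index $j_0$ chosen inside a block $[n_k,m_k]$ that lies immediately above a gap of length exceeding any degree we will need to worry about — here is where $\sup_k(n_{k+1}-m_k)=\infty$ is used. For $P(T)^{-1}(U)\cap V\neq\phi$ we would need some $w\in\mathcal{M}$, close to $e_{n_1}$, with $P(T)w$ close to $e_{j_0}$. Since $T=2B$ is a (weighted) backward shift, $P(T)w$ is supported on indices strictly below the top of the support of $w$; a vector near $e_{n_1}$ has support essentially at index $n_1$, so $P(T)w$ has support at indices $\le n_1$, which cannot be close to $e_{j_0}$ with $j_0\ge n_k > n_1$. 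One must be a little careful: ``close to $e_{n_1}$'' allows a small tail at higher indices, so the argument is quantitative — the mass of $P(T)w$ at index $j_0$ is bounded by $\|P(T)\|$ times the mass of $w$ above index $j_0$, but controlling $\|P(T)\|=\|\sum a_\lambda 2^\lambda B^\lambda\|$ for convex (not absolutely-summable) coefficients is exactly where the invariance constraint $a_1=\dots=a_d=0$ on the relevant block, forcing $P$ essentially trivial there, saves the estimate. Assembling these bounds contradicts $\|P(T)w-e_{j_0}\|<\epsilon$ for small $\epsilon$.

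The main obstacle I expect is the bookkeeping in the second step: reconciling ``$w$ is only approximately $e_{n_1}$'' with the rigidity coming from the gap. The clean way is to choose the radius $\epsilon$ of the balls and the target block $[n_k,m_k]$ in the right order — first fix $\epsilon$ small, then use $\sup_k(n_{k+1}-m_k)=\infty$ to pick $k$ with $n_{k+1}-m_k$ huge, then observe that any convex polynomial $P$ with $P(T)(\mathcal{M})\subseteq\mathcal{M}$ and $P(T)^{-1}(U)\cap V\neq\phi$ must have degree at least $n_{k+1}-m_k$ (otherwise its coefficients collapse as above and $P(T)$ moves no mass up to index $j_0$ at all), and then a counting/degree argument plus the explicit $2^\lambda$ weights of $T=2B$ shows even a high-degree admissible $P$ cannot place enough mass at $e_{j_0}$ starting from a vector concentrated near $e_{n_1}$. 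I would present the degree-collapse observation as a short preliminary claim, then the two balls, then the contradiction; the rest is routine shift algebra.
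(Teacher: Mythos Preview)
Your first step already contains the whole proof, but you do not extract its full force and instead drift into an unnecessary and under-specified second step. Once Lemma~\ref{lemma:1}(3) hands you a convex polynomial $P$ of some degree $d$ with $P(T)(\mathcal{M})\subseteq\mathcal{M}$, apply your coefficient-collapse observation not to the particular gap sitting below your chosen target block, but to \emph{any} gap longer than $d$; since $\sup_k(n_{k+1}-m_k)=\infty$, such a gap exists. Taking $j$ with $n_{j+1}-m_j>d$ and reading off
\[
P(T)e_{n_{j+1}}=\sum_{\lambda=0}^{d}a_\lambda\,2^\lambda e_{n_{j+1}-\lambda}\in\mathcal{M}
\]
forces $a_\lambda=0$ for every $1\le\lambda\le d$ (each index $n_{j+1}-\lambda$ lies in the gap $(m_j,n_{j+1})$), hence $P(T)=I$ \emph{globally}, not just ``on that block''. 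Thus the \emph{only} $\mathcal{M}$-invariant convex polynomial in $T$ is the identity. Now pick any two disjoint relatively open balls $U,V\subset\mathcal{M}$: transitivity would demand $I^{-1}(U)\cap V=U\cap V\neq\phi$, a contradiction. Your Step~2, with its mass bounds and its worry that ``$P$ must have degree at least $n_{k+1}-m_k$'', is built on a quantifier slip: you tie the collapse to the single gap below your chosen block, overlooking that an even larger gap elsewhere annihilates any higher-degree $P$ just the same. There are no high-degree admissible $P$'s to estimate.

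The paper's proof runs the identical mechanism but inline, without citing Lemma~\ref{lemma:1}. It takes the relatively open $W\subset U\cap P_m(T)^{-1}(V)$ supplied by the definition of transitivity, fixes $x\in W$, chooses $j$ with $n_{j+1}-m_j>m$, perturbs to $y=x+\tfrac{1}{2}\epsilon\,e_{n_{j+1}}\in W$, and subtracts $P_m(T)x\in\mathcal{M}$ from $P_m(T)y\in\mathcal{M}$ to obtain $P_m(T)e_{n_{j+1}}\in\mathcal{M}$; the same index computation then yields the contradiction. In other words, the paper's perturbation is exactly the argument of Lemma~\ref{lemma:1}$(1)\Rightarrow(3)$ carried out by hand on the single basis vector that matters.
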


\begin{proof}
Suppose that $T=2B$ is $\mathcal{M}$ convex-cyclic transitive. Let $U$ and $V$ be two non-empty open subsets of $\mathcal{M}$ and suppose that there exists a positive number $m$ such that $U \cap P_m (T)^{-1} V$ contains an open subset $W$ of $\mathcal{M}$. For $x \in W$, there exist $\epsilon > 0 $ such that if  $y \in \mathcal{M}$ and  $|| x - y || < \epsilon$; since $W$ is an open, and then $y \in W$.\\
Since $\sup_{k \geq 1}(n_{k+1} - m_k)=\infty , $ then $ \exists j \in \mathbb{Z}^+ \; \text{such that} \quad n_{j+1}- m_j > m. $\\
Since $|| e_{n_{j+1}} || \leq 1 $ \; and $\frac{1}{2} \epsilon < \epsilon $, \;
$|| \frac{1}{2} \epsilon \; e_{n_{j+1}} || \leq \epsilon . 1 $ \\
So $ ||x- (x+ \frac{1}{2} \epsilon \; e_{n_{j+1}} )|| < \epsilon . 1 $ \\
Now, consider $y := x + e_{n_{j+1}} \; \frac{1}{2} \epsilon \in \mathcal{M} $ and $||x-y||< \epsilon $ since $y \in \mathcal{M}$ so $y \in W \subset U \cap P_m (T)^{-1} V$, and $y \in P_m (T)^{-1} V $\, which means $P_m (T)(y) \in V $\, and \\
 $P_m (T)(y)=P_m (T)\left( x+ \frac{1}{2} \epsilon P_m (T) e_{n_{j+1}} \right) = P_m (T)(x)+ \frac{1}{2} \epsilon P_m (T) e_{n_{j+1}} \in V \subset \mathcal{M}$ \\
Then we get that  $ P_m (T) e_{n_{j+1}} \in \mathcal{M}$ \; which is contradiction, since
\begin{align*}
P_m (T) e_{n_{j+1}} &= \sum_{\lambda=0}^{m} a_{\lambda} T^{\lambda} (e_{n_{j+1}}) \\
 &= \sum_{\lambda=0}^{m} a_{\lambda} e_{n_{j+1}-\lambda} \in \; Lin\{e_r: n_j \leq r \leq m_j \} \subseteq \mathcal{M}. 
\end{align*}
That is $ n_{j+1}- \lambda \leq m_j $ and $ n_{j+1}-m \leq n_{j+1} - \lambda \leq m_j$. Which is contradicts the fact that $ n_{j+1}- m > m_j $.
\end{proof}

\begin{proposition}
\label{proposition: 4.8}
If $\sup_{k \geq 1}(m_k - n_k)=\sup_{k \geq 1}(n_{k+1} - m_k) = \infty , $\; then $T=2B$ is $\mathcal{M}$ convex-cyclic operator.
But $T$ is not  $\mathcal{M}$ convex-cyclic transitive.
\end{proposition}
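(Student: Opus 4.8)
The plan is to read the statement as the conjunction of the two preceding lemmas, so the work reduces to bookkeeping plus a check that the hypothesis is not vacuous. First I would observe that the hypothesis splits into two independent halves, each of which is exactly the hypothesis of one of the lemmas just proved: the condition $\sup_{k\geq 1}(m_k-n_k)=\infty$ is precisely what Lemma \ref{Lemma: sup=infty1} needs in order to conclude that $T=2B$ is $\mathcal{M}$ convex-cyclic, and the condition $\sup_{k\geq 1}(n_{k+1}-m_k)=\infty$ is precisely what Lemma \ref{Lemma: sup=infty2} needs in order to conclude that $T=2B$ is \emph{not} $\mathcal{M}$ convex-cyclic transitive. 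Applying the two lemmas in turn to the sequences $(n_k)$, $(m_k)$ at hand immediately yields both assertions, with no further computation.

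Second, I would verify that the hypothesis is genuinely satisfiable, i.e. that there exist increasing sequences $(n_k)$, $(m_k)$ with $n_k<m_k<n_{k+1}$ for all $k$ and with both $m_k-n_k$ and $n_{k+1}-m_k$ unbounded. This is elementary: one constructs the blocks inductively, choosing the $k$-th block $[n_k,m_k]$ to have length at least $k$ and then leaving a gap of length at least $k$ before starting the next block (so $n_k$, $m_k$ grow roughly quadratically in $k$). For such a choice the subspace $\mathcal{M}$ generated by $\{e_j : n_k\le j\le m_k,\ k\ge 1\}$ is an infinite-dimensional proper closed subspace of $\ell^p$, so the example is non-degenerate and $T=2B$ genuinely witnesses the phenomenon.

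Finally, I would record the conceptual payoff: Theorem \ref{theorem:2} shows that $\mathcal{M}$ convex-cyclic transitivity implies $\mathcal{M}$ convex-cyclicity, and the present proposition produces an operator for which the converse implication fails, thereby settling in the negative the question raised in the remark following Theorem \ref{theorem:2}. I do not expect any real obstacle in this proof; the only point requiring a moment's thought is the non-vacuousness check in the previous paragraph (making sure the two supremum conditions are mutually compatible), since the substantive analytic content has already been carried out in Lemmas \ref{Lemma: sup=infty1} and \ref{Lemma: sup=infty2}.
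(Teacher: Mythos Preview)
Your proposal is correct and matches the paper's approach: the paper states Proposition~\ref{proposition: 4.8} with no separate proof, since it is indeed the immediate conjunction of Lemmas~\ref{Lemma: sup=infty1} and~\ref{Lemma: sup=infty2}. Your additional non-vacuousness check (exhibiting sequences satisfying both suprema) is a welcome clarification that the paper omits.
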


\begin{example}
Let $S$ be a linear operator on  $\ell^p,\, p \geq 1,$ defined as $Se_n= \frac{1}{2} e_{n+1}.$ Let $\{ n_k \}_{k=1}^{\infty}$ be an increasing sequence of positive integer numbers such that $n_0 = 0$ and $n_{k+1} > 2 \sum_{i=1}^{k} n_i. $ Let $L_0 = \mathcal{M}_0 = Lin \{e_0\}$ be the linear space generated by $e_0,$ 
$L_1 = S^{n_1} \mathcal{M}_0,$ \qquad $\mathcal{M}_1 = \mathcal{M}_0 \oplus L_1,$ and in general, let 
$L_{k+1} = S^{n_{k+1}} \mathcal{M}_k,$ \qquad $\mathcal{M}_{k+1} = \mathcal{M}_k \oplus L_{k+1}.$
Define $$\mathcal{M}=\overline{\underset{k \geq }{\bigcup} \mathcal{M}_k}.$$
It is easy to show by induction that $\mathcal{M}_k \subset Lin\{ e_j : j \leq \sum_{i=0}^{k} n_i \},$ and 
$$L_{k+1} \subset Lin \{ e_j: n_{k+1} \leq j \leq \sum \limits_{i=0}^{k+1} n_i \}.$$
Thus if $x \in \mathcal{M},$ then $x(j)=0$ \quad for all $ \sum \limits_{i=0}^{k} n_i < j < n_{k+1}.$
\\ As a special case, for more clarify $n_0=0$ and since $$n_{k+1} > 2 \sum \limits_{i=0}^{k} n_i , $$
So we can suppose that.
\begin{align*}
n_1 & = n_{0+1}=1 > 2 \sum \limits_{i=0}^{0} n_i = n_0 = 0 \\
n_2 & = n_{1+1}=3 > 2 \sum \limits_{i=0}^{1} n_i = 2(n_0+n_1)=2(0+1) = 2\\
n_3 & = n_{2+1}=9 > 2 \sum \limits_{i=0}^{2} n_i = 2(n_0+n_1+n_2)=2(0+1+3) = 8\\
n_4 & = n_{3+1}=27 > 2 \sum \limits_{i=0}^{3} n_i = 26.\\
& \qquad \qquad \qquad \vdots \\
 So,& \; \{n_k \}^{\infty}_{k=0}  = \{ 0,1,3,9,27, \cdots \}. \\
L_0 & = \mathcal{M}_0= Lin \{ e_0 \} \\
L_1 & = S^{n_1} (\mathcal{M}_0)=S^{1} (\mathcal{M}_0) = S(\alpha e_0)= \frac{\alpha}{2} e_1 \in Lin \{ e_1 \} \\
\mathcal{M}_1& = \mathcal{M}_0 \oplus L_1 = Lin\{e_0\} + Lin\{ e_1 \} = Lin \{ \{e_0\} \cup \{ e_1 \} \} = Lin \{e_0, e_1 \} \\
L_2& = S^{n_2} (\mathcal{M}_1) = S^{3} (\mathcal{M}_1) = S^3 \left( \alpha_1 e_0 + \alpha_2 e_1) \right) = \frac{\alpha_1}{2^3} e_3 + \frac{\alpha_2}{2^4} e_4 \in Lin \{ e_3, e_4 \} \\
 \mathcal{M}_2& = \mathcal{M}_1 \oplus L_2 = Lin\{e_0,e_1 \} + Lin\{ e_3, e_4 \} =  Lin \{e_0, e_1,e_3,e_4 \} \\
 L_3& = S^{n_3} (\mathcal{M}_2) = S^{9} (\mathcal{M}_2) = S^9 \left( \alpha_1 e_0 + \alpha_2 e_1 + \alpha_3 e_3 + \alpha_4 e_4 \right)\\
&  = \frac{\alpha_1}{2^9} e_9 + \frac{\alpha_2}{2^{10}} e_{10} + \frac{\alpha_3}{2^{12}} e_{12} + \frac{\alpha_4}{2^{13}} e_{13}    \in Lin \{e_9, e_{10}, e_{12}, e_{13} \} \\
\mathcal{M}_3 & = \mathcal{M}_2 \oplus L_3 = Lin \{e_0, e_1,e_3,e_4 \} \oplus Lin \{e_9, e_{10}, e_{12}, e_{13} \} \\
&= Lin \{e_0, e_1,e_3,e_4 , e_9, e_{10}, e_{12}, e_{13} \}  \\
& \qquad \qquad \qquad \vdots \\
& \text{and so on for } \, k=4,5, \cdots , \, \text{we see that} \\
L_{1}& =L_{0+1} \subseteq Lin \{ e_j: n_{0+1} \leq j \leq \sum \limits_{i=0}^{0+1} n_i \} = Lin \{ e_1 \} \\
L_{2}& =L_{1+1} \subseteq Lin \{ e_j: n_{1+1} \leq j \leq \sum \limits_{i=0}^{1+1} n_i \} = Lin \{ e_j ; 3 \leq j \leq 4 \} \\
L_{3}& =L_{2+1} \subseteq Lin \{ e_j: n_{2+1}=9 \leq j \leq \sum \limits_{i=0}^{2+1} n_i = 0+1+3+9 \} \\
& = Lin \{ e_j ; 9 \leq j \leq 13 \} \\
Also & \; \mathcal{M}_1  \subset  \{ e_j : j \leq \sum \limits_{i=0}^{1} n_i= 0+ 1 \} \\
 \mathcal{M}_2 & \subset  \{ e_j : j \leq \sum \limits_{i=0}^{2} n_i= 0+ 1+3 =4 \} \\
 \mathcal{M}_3 & \subset  \{ e_j : j \leq \sum \limits_{i=0}^{3} n_i= 0+ 1+3+9 =13 \} 
\end{align*}

Also notice that $ e_2 \notin  \mathcal{M}_2$, and $e_2, e_5, e_6, e_7, e_8, e_{11} \notin \mathcal{M}_3.$ This can be represented by the the following condition\\
if $x \in \mathcal{M}$ and $\mathcal{M}= \overline{\bigcup_{k \geq 0} \mathcal{M}_k}$, then \\
for $k=1,$ \; $x(j)=0$ \; for  $\sum \limits_{i=0}^{k} n_i=1 < j < 3 = n_{k+1}$ \\
for $k=2,$ \; $x(j)=0$ \; for  $\sum \limits_{i=0}^{k} n_i=4 < j < 9 = n_{k+1}$. \\
Thus we can define $\mathcal{M}_k, \mathcal{M},$ and $L_k$ as above, which satisfy all above conditions of Theorem \ref{th: cri2}.
\end{example}

\begin{example}
\label{example: on cri2}
Let $B$ be an operator defined as above. Then the operator $T=2B$ satisfies the first two conditions of Theorem \ref{th: cri2}, but $T$ is not $\mathcal{M}$ convex-cyclic operator.
\end{example}

\begin{proof}
Let $Y= \left( y_j \right)_{j=1}^{\infty} \subset c_{00} \bigcap \mathcal{M}$ be a dense sequence in $\mathcal{M}$. Then there exists an increasing sequence $ \left( k_j \right)_{j=1}^{\infty} $ such that $y_i \in \mathcal{M}_{k_{j}},   i \leq j.$ \\
Let $X := \bigcup \limits_{j \geq 1} \{ S^{n_{k_i}} y_j \; : \; i \geq j \; \} $ \\
Now we will verify that $T$ with $X,Y$ and $\left(P_{n_{k_i}} \right)_{i=1}^{\infty}$; sequence of convex polynomial, satisfies the first two conditions of Theorem \ref{th: cri2}.
\begin{enumerate}
\item[\underline{First}:]
Let $x \in X$ be an arbitrary element. Then we will show that $P_{n_{k_r}}(T)x \to 0. $
Since $x \in X,$ then there exist $j \geq 1$ such that $x := S^{n_k} y_j \;$ for $i \geq j, $ we choose $n_{k_i}$ large enough such that $\dfrac{1}{2^{n_{k_i}}} \to 0.$
\begin{align*}
P_{n_{k_r}}(T)(x) &= P_{n_{k_r}}(T) S^{n_{k_i}} y_j \\
&= P_{n_{k_r}}(T) \dfrac{1}{2^{n_{k_i}}} y_{j+n_{k_i}} \\
&= \sum_{\lambda = 0}^{n_{k_r}} \frac{a_{\lambda}}{2^{n_{k_i}}} T^{\lambda} y_{j+n_{k_i}} \\
&= \frac{1}{2^{n_{k_i}}} \sum_{\lambda = 0}^{j+ n_{k_{i}-1}} a_\lambda 2^\lambda B^\lambda (y_{j+n_{k_i}}) + \frac{1}{2^{n_{k_i}}} \sum_{\lambda = j+ n_{k_{i}}}^{n_{k_r}} a_\lambda 2^\lambda B^\lambda (y_{j+n_{k_i}}) \\
&= \frac{1}{2^{n_{k_i}}} \sum_{\lambda = 0}^{j+ n_{k_{i}}-1} a_\lambda 2^\lambda B^\lambda (y_{j+n_{k_i}}) + 0  \, \\
& \left( \text{Since $B$ is backward shift operator} \right) \\
& \to 0  \qquad \left( \text{Since $n_{k_r}$ is large enough for $i \geq j$.} \right)\\
& \text{So for all} \; x \in X, \quad P_{n_{k_r}}(T)x \to 0.
\end{align*}

\item[\underline{Second}:]
For each $y \in Y \subset c_{00} \bigcap \mathcal{M}, $ we must show that there exists a sequence $\left(x_{k_i} \right)_{i=1}^{\infty} $ in $X := \bigcup \limits_{j \geq 1} \{ S^{n_{k_i}} y_j \; : \; i \geq j \; \} $ such that.\\
$ x_{k_i}(y_j) \to 0 $ and $ P_{n_{k_i}}(T) x_{k_i} \to y. $\\
Since $y \in Y= \left(y_j \right)^{\infty}_{j=1},$ then there exist $j \geq 1$ such that $y=y_j$ and $y_j \in c_{00} \bigcap \mathcal{M}. $ So there exist $k_i$ such that $y_j \in \mathcal{M}_{k_i},$ for $j \leq i.$  But $ \mathcal{M}_{k_i}= \mathcal{M}_{k_i-1} \oplus L_{k_i},$ so there exist $x_{k_i} \in L_{k_i}$ such that $y_j = y_{j-1}+ x_{k_i},$ and $L_{k_i}= S^{n_{k_i}}(\mathcal{M}_{k_i-1})$ and $i \geq j$ so $x_{k_i} \in X.$ Hence, the existence of the sequence $\left(x_{k_i} \right)_{i=1}^{\infty}$ in $X$ is done.\\
$ x_{k_i} = S^{n_{k_i}} (y_j)= \frac{1}{2^{n_{k_i}}} y_{j+n_{k_i}} $ as $k_i \to \infty $ and $(y_j)\in c_{00}$ then $ n_{k_i} \to \infty$ and  $ x_{k_i} \to 0.$\\
To show $P_{n_{k_i}}(T)x_{k_i} \to y.$ Since
\begin{align*}
P_{n_{k_i}}(T)x_{k_i} &= P_{n_{k_i}}(T)S^{n_{k_i}}(y_j)  \\
& = P_{n_{k_i}}(T)y_{j+n_{k_i}} \frac{1}{2^{n_{k_i}}} \\
&=  \sum_{\lambda = 0}^{ n_{k_{i}}} a_\lambda \frac{1}{2^{n_{k_i}}} T^\lambda (y_{j+n_{k_i}}) \\
&=  \sum_{\lambda = 0}^{n_{k_{i}}} \frac{1}{2^{n_{k_i}}} a_\lambda 2^\lambda B^\lambda (y_{j+n_{k_i}}) \\
&= \frac{a_0}{2^{n_{k_i}}}y_{j+n_{k_i}} + \frac{a_1}{2^{n_{k_i-1}}}y_{j+n_{k_i}-1} + \cdots + a_{n_{k_i}} y_j \\
& \text{as} \; k_i \to \infty, \quad then \; P_{n_{k_i}}(T)x_{k_i} \to y_j = y.
\end{align*}
\end{enumerate}
So the first two conditions of Theorem \ref{th: cri2} satisfied.\\
It remains two show that $T$ is not $\mathcal{M}$ convex-cyclic operator. Suppose that $T$ is $\mathcal{M}$ convex-cyclic operator with $\mathcal{M}$ convex-cyclic vector $x \in \mathcal{M}$. For an arbitrary $k_i$, there exists $m$ such that $P_{n_{k_i}}(T)x \in \mathcal{M},$ where
\begin{equation}
\label{eq:1}
m \geq \sum_{\lambda = 0}^{ k_{i}} n_\lambda.
\end{equation}
One can choose $l_i > k_i$, such that
\begin{equation}
\label{eq:2}
n_{l_i +1} -  \sum_{\mu = 0}^{ l_{i}} n_{k_\mu} > 2m,
\end{equation}
this implies that
$\sum_{\mu = 0}^{ l_{i}} n_{\mu} \leq \sum_{\mu = 0}^{ l_{i}} n_{\mu}+m < n_{l_i+1}-m $ so
\begin{equation}
\label{eq:3}
\sum_{\mu = 0}^{ l_{i}} n_{\mu} < n_{l_i+1}-m < n_{l_i+1}-m + \sum_{\mu = 0}^{ k_{i}} n_{\mu}.
\end{equation}
From Equation (\ref{eq:1}) we get that $-m+ \sum_{\mu = 0}^{ k_{i}} n_{\mu} $ has negative value, so
\begin{equation}
\label{eq:4}
\sum_{\mu = 0}^{ l_{i}} n_{\mu} < n_{l_i+1}-m < n_{l_i+1}-m + \sum_{\mu = 0}^{ k_{i}} n_{\mu} < n_{l_i+1}.
\end{equation}
Equation (\ref{eq:4}) gives us $\sum_{\mu = 0}^{ l_{i}} n_{\mu} + m < n_{l_i+1} < n_{l_i+1} + m$ so there exists some positive integer $r$ such that 
\begin{equation}
\label{eq:5}
\sum_{\mu = 0}^{ l_{i}} n_{\mu} + m  \leq r \leq  n_{l_i+1} +m
\end{equation}
that is $\sum_{\mu = 0}^{ l_{i}} n_{\mu} \leq r-m \leq n_{l_i+1}.$ Now from Remark \ref{remark:N2C} if $x \in \mathcal{M},$ then $x(r-m)=0$ for all $\sum_{\mu = 0}^{ l_{i}} n_{\mu} \leq r-m \leq n_{l_i+1}$,
so for $r$ satisfies Equation (\ref{eq:5}),
$$ P_{n_{k_i}}(T)x(r)=  \sum_{\lambda = 0}^{n_{k_{i}}} a_{\lambda}  T^\lambda x(r) =  \sum_{\lambda = 0}^{n_{k_{i}}} a_{\lambda}  2^\lambda x(r-\lambda)=0. $$
By construction of $\mathcal{M},$ it is easy two see that $\mathcal{M} \bigoplus_{q=0}^{\infty} S^{n_{k_q}} \mathcal{M}_{l_1}$ for some increasing sequence $\left( n_{k_q} \right)_{q=0}^{\infty} $ such that $n_{k_0}=0$ and $n_{k_q+1} - n_q - \sum_{\mu = 0}^{ l_{i}} n_{\mu} > 2m , \quad q \geq 0.$\\
Now, for all \quad $n_{k_q} \leq r \leq  n_{k_q} + \sum_{\mu = 0}^{ k_{i}} n_{\mu}  \quad q \geq 1. $
$$ P_{n_{k_i}}(T)x(r)=  \sum_{\lambda = 0}^{n_{k_{i}}} a_{\lambda}  T^\lambda x(r) =  \sum_{\lambda = 0}^{n_{k_{i}}} a_{\lambda}  2^\lambda x(r-\lambda) \in Lin\{e_0,e_1,\cdots,e_{\kappa}  \} \subseteq \mathcal{M} $$
where $0 \leq r- \lambda \leq \sum_{\mu = 0}^{{k_{i}}} n_\mu = \kappa$ \qquad for all $\lambda=0,1,\cdots,n_{k_{q}}$.\\
So we get for some $x \in X, P_{n_{k_i}}(T)x \notin \mathcal{M}$  i.e. $x \in X$ but $x \notin P_{n_{k_i}}(T)^{-1}(M).$
\end{proof}

To explain the above proof numerically let $n_k= \{0,1,3,9,27, \cdots  \}$, \quad $e_4 \in X$ since $2^4 e_4 = S^3(e_1)$ as $3 \geq 1$. We have
\begin{align*}
P_{n_{k_i}}(T)(e_4)&= 2^4 a_0 e_4 + a_1 T(e_4) + a_2 T^2 e_4 + \cdots + a_{n_k} T^{n_k} e_4 \\
&= \alpha_0 e_4 + \alpha_1 e_3 + \alpha_2 e_2 + \alpha_3 e_1 + \alpha_4 e_0 + 0 + 0 + \cdots + 0 \\
& \in Lin \{ e_0,e_1,e_2,e_3,e_4 \} \qquad \textrm{ but } \qquad e_2 \notin \mathcal{M}.
\end{align*}

\begin{example}
\label{example: Last}
Let $\lambda \in \mathbb{C}$ such that $|\lambda| > 1$, and consider $T:= \lambda B$ where $B$ is the backward shift on $\ell^2.$ Let $\mathcal{M}$ be the subspace of $\ell^2$ consisting of all sequences with zeros on the even entries:
$\mathcal{M} = \left\lbrace \{ a_n \}_{n=0}^{\infty} \in \ell^2 : a_{2k} =0 \; \text{for all} \; k \right\rbrace.$ Then $T$ is $\mathcal{M}$ convex-cyclic operator for $\mathcal{M}$.
\end{example}

\begin{proof}
We will apply Theorem \ref{th: cri1} to give an alternative proof. Let $X=Y$ be the subsets of $\mathcal{M}$ consisting of all finite sequences; i.e., those sequences that only have a finite number of non-zero entries: this clearly is a dense subset of $\mathcal{M}$. Let $P_k(T):= T^{2k}$ where $\{ P_k\}_{k>1}$ is a sequence of convex polynomial. \\
Now let us check that conditions $(1), (2)$ and $(3)$ of Theorem \ref{th: cri1} hold.\\
Let $x \in X$. Since $x$ only has finitely many non-zero entries, $P_k(T)x$ will be zero eventually for $k$ large enough. Thus $(1)$ holds. Let $y \in Y$ and define $x_k:= \frac{1}{\lambda^{2k}} S^{2k}y,$ where$S$ is the forward shift operator on $\ell^2$. Each $x_k$ is in $ \mathcal{M}$ since the even entries of $y$ are shifted by $S^{2k}$ into the even entries of $x_k$. We have\\
$|| x_k ||= \frac{1}{|\lambda|^{2k}} ||y||$, and thus it follows that $x_k \to 0,$ since $|\lambda| > 1.$ Also, because \\
$P_k(T)(x_{k}):= T^{2k}(x_{k}) = (\lambda B)^{2k}(x_{k}) = (\lambda B)^{2k} \frac{1}{\lambda^{2k}} S^{2k} y = y,$ we have that condition $(2)$ holds.\\
The fact that condition $(3)$ holds follows from the fact that if a vector has  a zero on all even positions then it will also have a zero entry on all even positions after the application of the backward shift any even number of times. So the $\mathcal{M}$ convex-cyclic operator of $T$ now follows.
\end{proof}

\begin{remark}
There is a relation between $\mathcal{M}$ convex-cyclic operator and invariant subspace, as we see in Theorem \ref{theorem:1} that  $\mathcal{M}$ is invariant for $P_k(T)$ for all $k$, also in Example \ref{example: 1st} is invariant for $P_k(T)$ whenever $k=1$. But the converse is not true, i.e. if $P_k(T)$ for all $k$ is $\mathcal{M}$ convex-cyclic operator then it does not need to $\mathcal{M}$ to be invariant under $P_k(T)$, as in Example \ref{example: Last} when $k=1$. But subspace $\mathcal{M}$ is invariant under $\mathcal{M}$ convex-cyclic operator for $T^{2k}$.
\end{remark}

\section{Some questions}
We end our paper with some open questions on $\mathcal{M}$ convex-cyclic  operators.

\smallskip

As Bourdon and Feldman \cite{q1} proved that for hypercyclic operator that somewhere dense orbits are everywhere dense so it is natural to ask: 

\textbf{Open question 1.} 
As any orbit of $\mathcal{M}$ convex-cyclic operator in $\mathcal{M}$ is somewhere dense does it imply it to be everywhere dense in $\mathcal{M}$? 

\textbf{Open question 2.}
Does there exist $\mathcal{M}$ convex-cyclic operator for $\mathcal{M}$ such that we do not have $P_k(T)(M) \subset M $ for any $k$?

\smallskip

H. Rezaei \cite{A} defined convex-cyclic operator and characterized completely such operators on finite dimensional vector spaces, we can also ask: 

\textbf{Open question 3.}
Is there any $\mathcal{M}$ convex-cyclic operator 
on finite dimensional vector spaces? 
%Thus it is natural to ask: 

\textbf{Open question 4.}
If $T: \mathcal{H} \to \mathcal{H}$ is $\mathcal{M}$ convex-cyclic  operator over $\mathcal{M}$ on Hilbert space $\mathcal{H}$, then is $T^m$ $\mathcal{M}$ convex-cyclic operator for every integer $m > 1$? \\

\end{document}